\newtheorem{theorem}{Theorem}
\newtheorem{corollary}[theorem]{Corollary}
\newtheorem{definition}[theorem]{Definition}
\newtheorem{lemma}[theorem]{Lemma}
\newtheorem{proposition}[theorem]{Proposition}
\newtheorem{remark}[theorem]{Remark}
\newenvironment{proof}[1][Proof]{\textbf{#1.} }{\ \rule{0.5em}{0.5em}}
\begin{document}

\title{
$\mathcal{D}$%
-modules 
on a class of 
$G$%
-representations}
\author{Philibert Nang}
\maketitle

\begin{abstract}
We give an answer to the abstract Capelli problem:
Let
$(G,V)$ 
be a multiplicity-free finite-dimensional representation of a connected reductive complex Lie Group
$G$
and
$G'$
be its derived subgroup.
Assume that the categorical quotient
$V//G$
is one-dimensional
, i.e., there exists a polynomial
$f$ 
generating the algebra of
$G'$%
-invariant 
polynomials on 
$V$
(%
$\mathbb{C}[V]^{G'}= \mathbb{C}[f]$%
)
and such that
$f \not\in \mathbb{C}[V]^{G}$%
. 
We prove that the category of regular holonomic 
$\mathcal{D}_{V}$%
-modules invariant under the action of
$G$
is equivalent to the category of graded modules of finite type over a suitable algebra
$\mathcal{A}$%
. 
This has been conjectured by T. Levasseur \cite[Conjecture 5.17, p. 508]{L}
(after we had already proved it in some cases: \cite{N1}, \cite{N2}, \cite{N3}, \cite{N4}).\medskip

\noindent \textbf{Keywords}\textit{: }$\mathcal{D}$%
-modules, 
holonomic $\mathcal{D}$%
-modules, invariant differential operators, irreducible representations,
prehomogenous vector spaces, multiplicity-free spaces, Capelli identity, representations of Capelli type \textit{.\medskip }

\noindent \textbf{2000 Mathematics Subject Classification}.\textbf{\ }%
Primary 32C38; Secondary 32S25, 32S60
\end{abstract}

\section{Introduction}

One of the hightlights of the theory of
$\mathcal{D}$%
-modules 
is the Riemann-Hilbert correspondence. It establishes a bridge between analytic objects
(regular holonomic 
$\mathcal{D}$%
-modules) and geometric ones (constructible sheaves). This has been proved independently by 
M. Kashiwara \cite{K1} and Z. Mebkhout \cite{Me}.
One interprets the Riemann-Hilbert correspondence as a generalization of Deligne's
solution of the
21-st problem of Hilbert
(see N. M. Katz \cite{Kat} and also Z. Mebkhout \cite{Me1}, \cite{Me2}).
In a sense the Riemann-Hilbert correspondence generalizes the well known one-to-one correspondence
between vector bundles with integrable connections and local systems. Actually, this correspondence
induces an equivalence of categories between the category of regular holonomic 
$\mathcal{D}$%
-modules
with a characteristic variety
$\Lambda$
and that of perverse sheaves on
$V$
(where
$V$
is a complex manifold)
with microsupport
$\Lambda$%
.
In fact, a perverse sheaf is not really a sheaf but rather a complex of sheaves.
More precisely, it is an object of the derived category. Also, the notion of a morphism
between perverse sheaves is difficult to handle. But, on the other side, it is clear what
is meant by a 
$\mathcal{D}_{V}$%
-module
and a 
$\mathcal{D}_{V}$%
-linear morphism.
So, if one wants to understand the structure of perverse sheaves on
$V$%
, it is certainly worthwhile to take advantage of the Riemann-Hilbert correspondence and
to study the category of regular holonomic
$\mathcal{D}_{V}$%
-modules. These objects are perhaps more accessible.\newline

\qquad 
Let 
$ G $
be a complex connected reductive algebraic group,
and let
$ G'=\left[G, G\right] $
be its derived subgroup.
Denote by
$(G, \rho, V)$
or
$ \left(G, V\right) $
a rational finite-dimensional linear representation of
$G$
($\rho : G \longrightarrow GL(V, \mathbb{C})$) 
and 
$\mathbb{C}[V]$ 
the algebra of polynomials on
$V$%
.
The action of 
$G$
on 
$V$
extends to
$\mathbb{C}[V]$%
.
We will denote by
$\mathbb{C}[V]^{G}  \subset \mathbb{C}[V]$
the subalgebra of
$G$%
-invariant polynomials on
$V$%
.
We assume that
$ \left(G, V\right) $
is a multiplicity-free space, that is, the associated representation
of
$G$
on
$\mathbb{C}[V]$
decomposes without multiplicities. In other words, each irreducible representation of
$G$
occurs at most once in
$\mathbb{C}[V]$
(see definition \ref{d00}).
For the classification and properties of multiplicity-free spaces, we refer to
the work by C. Benson and G. Ratcliff \cite{BR}, F. Knops \cite{Kn}, A. Leahy\cite{Le}.
We assume furthermore that the multiplicity-free space
$ \left(G, V\right) $
has a one-dimensional quotient (i.e., the categorical quotient is one-dimensional: 
$\dim\left(V//G\right) = 1$%
), that is, there exists a polynomial
$f$  
on 
$V$
such that the subalgebra 
$\mathbb{C}[V]^{G'} $ 
of
$G'$%
-invariant 
polynomials on 
$V$
is the algebra of polynomials in
$f$
(i.e.,
$\mathbb{C}[V]^{G'}= \mathbb{C}[f]$%
), and such that
$f \not\in \mathbb{C}[V]^{G}$
(see definition \ref{d2}).
Then, it is known that:
$G$
acts on 
$V$
with an open orbit, and in this case the representation
$(G,V)$
is called a prehomogeneous vector space
(see M. Sato \cite{S}, \cite{S-K} or T. Kimura \cite[chap. 2]{Ki}). 
Moreover, it is shown in \cite[p. 39, proposition 2.22 ]{Ki} that: for 
such a reductive prehomogeneous vector space, there exists a constant 
coefficient differential operator 
$ \Delta $
and a polynomial
$$ b(s)=c(s+1)(s+\lambda_{1} +1)\cdots(s+\lambda_{n-1} +1) \in \mathbb{R}_{n}[s]\text{,}\quad c>0\text{,} $$
called the Bernstein-Sato polynomial of
$ f $
such that
$$ \Delta{f^{s+1}}= b(s)f^{s} \text{.}$$
M. Kashiwara \cite{K0} has shown that the roots of this polynomial are rational, i.e.,
$\lambda_{j} \in \mathbb{Q}$
for 
$1 \leq j \leq n-1$%
.

As usual 
$\mathcal{D}_{V}$
is the sheaf of rings of differential operators on
$V$ 
with holomorphic coefficients.
Let us now point out that the action of
$G$
on
$\mathbb{C}[V]$
extends to 
$\Gamma{(V,\mathcal{D}_{V})^{\mathrm{pol}}}$
the 
$\mathbb{C}$%
-algebra of differential operators on
$V$
with polynomial coefficients in 
$\mathbb{C}[V]$%
.
This gives rise to a natural algebra: the Weyl algebra
$\Gamma \left( V,\mathcal{D}_{V}\right) ^{G}$ 
of polynomial coefficients
$G$%
-invariant differential operators on
$V$%
.\newline

\noindent If
$G$
is a Lie group, 
let 
$ \mathfrak{g} $
be the Lie algebra of
$ G $
and 
$ U(\mathfrak{g}) $ 
be the universal enveloping algebra of
$ \mathfrak{g} $%
.
A representation as above
$ \left(G, V\right) $
is said to be of "Capelli type" if 
$ \left(G, V\right) $
is an irreducible multiplicity-free representation (MF for short) such that:
the subalgebra of
$G$%
-invariant global algebraic sections 
$\Gamma \left( V,\mathcal{D}_{V}\right) ^{G}$
is the image of
$Z\left(U\left(\mathfrak{g}\right)\right)$%
, the center of
$ U(\mathfrak{g}) $%
, under the differential
$ \tau:\mathfrak{g}\longrightarrow \Gamma{(V,\mathcal{D}_{V})^{\mathrm{pol}}} $
of the 
$ G $%
-action,
i.e.,
$$\tau\left(Z\left(U\left(\mathfrak{g}\right)\right)\right)= \Gamma\left( V,\mathcal{D}_{V}\right) ^{G} $$
(see definition \ref{d3}).
Note that these representations have been studied by R. Howe and T. Umeda in 
\cite{H-U1},\cite{U}:
they fall into eight cases (see Appendix).

If 
$ \left(G, V\right) $
is of Capelli type; in particular if
$ \left(G, V\right) $
is MF, then V. G. Kac
\cite{Ka} asserts that 
$ G $
has finitely many orbits
$(V_{k})_{k\in K}$%
.
Let us denote by
$\Lambda :=\bigcup\limits_{k\in K}\overline{T_{V_{k}}^{\ast }V} \subset T^{\ast }V$
the Lagrangian subvariety which is the union of the closure of conormal bundles to the 
$G$%
-orbits
(see \cite{Pa}).
\newline
Recall that a coherent
$\mathcal{D}_{V}$%
-module
$\mathcal{M}$
is said to be holonomic if its characteristic variety 
$\rm{char}\left(\mathcal{M}\right)$ 
is Lagrangian. Equivalently, the characteristic variety is of dimension equal to 
$\dim{V}$%
.
The holonomic 
$\mathcal{D}_{V}$%
-module
$\mathcal{M}$
is called regular if there exists a global good filtration
$F\mathcal{M}$
on
$\mathcal{M}$
such that the annihilator of 
$\mathrm{gr}^{F}\mathcal{M}$
(i.e., the ideal
$\mathrm{ann}_{\mathbb{C}[T^{*}V]}\mathrm{gr}^{F}\mathcal{M}$) 
is a radical ideal in
$\mathrm{gr}^{F}\mathcal{M}$
(see \cite[definition 5.2]{K1} or 
\cite[Corollary 5.1.11]{KK1}).
\newline
Denote by 
$\mathrm{Mod}_{\Lambda }^{\mathrm{rh}}(\mathcal{D}_{V})$ 
the full category whose objects are holomorphic regular holonomic 
$\mathcal{D}_{V}$%
-modules
$\mathcal{M}$%
, whose characteristic variety
$\rm{char}\left(\mathcal{M}\right)$ 
is contained in
$\Lambda$%
, equivalently those which admit global good filtrations
stable under the induced action of the Lie algebra
$\mathfrak{g}$
of
$G$
on
$\mathcal{M}$
(see Remark \ref{special})
. The general problem consists in the description of the category
$\mathrm{Mod}_{\Lambda }^{\mathrm{rh}}(\mathcal{D}_{V})$%
.\newline

The expected shape to the general solution of the family of problems is as follows. 
Let us first recall that
$ G' $
is the derived subgroup of
$ G $%
. We denote by
$$\bar{\mathcal{A}}:= \Gamma \left( V,\mathcal{D}_{V}\right) ^{G'} \subset \Gamma{(V,\mathcal{D}_{V})^{\mathrm{pol}}}$$
the 
$\mathbb{C}$%
-algebra formed by
$G'$%
-invariant global algebraic sections of
$\mathcal{D}_{V}$%
, i.e.
the algebra of polynomial coefficients
$ G' $%
-invariant differential operators. This algebra is well understood (see \cite{H-U1}, \cite{L}),
in particular it contains 
$\theta$
the Euler vector field on
$V$%
. Note that R. Howe and T. Umeda
\cite{H-U1}
have proved that when
$ (G, V) $
is of Capelli type, the algebra
$\bar{\mathcal{A}}$
is a polynomial algebra on a canonically defined set of generators. These generators are
precisely the Capelli operators.
T. Levasseur \cite[Theorem 4.11, p. 491]{L}, 
H. Rubenthaler \cite[p. 1346, proposition 3.1, 1)]{R1}
or
\cite[p. 24, theorem 5.3.3]{R2}
and
Z. Yan \cite[theorem 1.9]{Y}
gave a general description of this algebra.
We should also mention the contribution by M. Muro, in the real case 
$(G, V) = (GL(n, \mathbb{R}), S^{2}(\mathbb{R}^{n}))$
in
\cite[Proposition 2.1, p. 356]{Mu}. 
Finally, when
$(G, V) = (GL(n, \mathbb{C})\times SL(n, \mathbb{C}), M_{n}(\mathbb{C}))$%
,
$(GL(2m, \mathbb{C}),\; \Lambda^{2} \mathbb{C}^{2m})$%
,
$(GL(n, \mathbb{C}),\; S^{2}\mathbb{C}^{n} )$%
, the author obtained a concrete description with explicit relations in
\cite[Proposition 6, p. 120 ]{N0}, \cite[Proposition 5, p. 637-638 ]{N1}, \cite[Proposition 8, p. 4]{N5}. \newline

\noindent If 
$\mathcal{J}:=\mathrm{ann}\mathbb{C}[V]^{G'} = \mathrm{ann}\mathbb{C}[f] \subset \bar{\mathcal{A}}$
denotes the two sided ideal annihilator of
$G'$%
-invariant polynomials on
$V$%
,
we consider
$ \mathcal{A} $
the quotient algebra
$\bar{\mathcal{A}}/ \bar{\mathcal{J}}$%
, going modulo a suitable ideal
$\bar{\mathcal{J}}$
of 
$\bar{\mathcal{A}}$
described in section \ref{2}:
$\bar{\mathcal{J}}$
is the preimage in 
$\overline{\mathcal{A}}$
of the ideal in
$\overline{\mathcal{A}}/\mathcal{J}$
defined by specific relations (\ref{a0}), (\ref{b0}), (\ref{c0}), (\ref{d0})
of Proposition \ref{pu}.
Following the work by Benson - Ratcliff \cite{BR}, Howe - Umeda \cite{H-U1}, Knopp \cite{Kn}
and Levasseur \cite{L},
we will deduce that the quotient algebra 
$\mathcal{A}$
is generated by the following three operators and relations (see Corollary \ref{cu}): 
$\theta$
the Euler vector field on
$V$%
, 
$f$
the multiplication by the polymonial
$f(x)$ 
of degree
$n$%
,
and the differential operator
$\Delta := f\left(\dfrac{\partial}{\partial{x}}\right)$
as above satisfying the Bernstein-Sato equations:
$$ \Delta{f}= c(\dfrac{\theta}{n}+1)(\dfrac{\theta}{n}+\lambda_{1}+1)\cdots(\dfrac{\theta}{n}+\lambda_{n-1}+1) \text{,}\quad
f\Delta= c\dfrac{\theta}{n}(\dfrac{\theta}{n}+\lambda_{1})\cdots(\dfrac{\theta}{n}+\lambda_{n-1}) \text{,}\quad c>0 $$
and the relations
$$\left[\theta, f\right]= nf \text{,}\qquad\left[\theta, \Delta\right]= -n\Delta \text{.}$$
\newline
Let 
$\mathrm{Mod}^{\mathrm{gr}}(\mathcal{A})$ 
be the category whose objects are finitely generated left
$\mathcal{A}$%
-modules 
$T$
such that for each 
$s \in T$%
, the 
$\mathbb{C}$%
-vector space spanned by the set
$\{\theta^{n}s\; /\; n\geq 1\}$
is finite dimensional.
In other words, this category consists of all graded left
$\mathcal{A}$%
-modules 
$T$
of finite type for 
$\theta$ 
the Euler vector field on
$V$%
.
\medskip

\noindent The functor
$\Psi : \mathrm{Mod}_{\Lambda }^{\mathrm{rh}}(\mathcal{D}_{V})
\longrightarrow \mathrm{Mod}^{\mathrm{gr}}(\mathcal{A})$%
, defined by taking
$\Psi(\mathcal{M})$
to be the set of all
$\mathfrak{g}$%
-invariant
$\theta$%
-homogeneous global sections of
$\mathcal{M}$%
, with quasi-inverse
$\Phi :\mathrm{Mod}^{\mathrm{gr}}(\mathcal{A}) \longrightarrow
\mathrm{Mod}_{\Lambda }^{\mathrm{rh}}(\mathcal{D}_{V})$
defined by
$\Phi(T): = \mathcal{D}_{V}\otimes_{\mathcal{A}} T$%
, give the equivalence of categories:\newline

\noindent \textbf{Theorem \ref{MR}}: Let 
$(G,V)$ 
be a representation of Capelli type with a one-dimensional quotient. Then the categories
$\mathrm{Mod}_{\Lambda }^{\mathrm{rh}}(\mathcal{D}_{V})$
and 
$\mathrm{Mod}^{\mathrm{gr}}(\mathcal{A})$
are equivalent.\bigskip

\noindent This has been conjectured by Levasseur \cite[Conjecture 5. 17, p. 508]{L},
after we had already proved it in the following cases (see \cite{N0}, \cite{N1}, \cite{N2}, \cite{N3}, \cite{N4}, \cite{N5}, \cite{N6}):\medskip 

\begin{itemize}
\item $(G = GL(n)\times{SL(n)},\; V = M_{n}(\mathbb{C}))$
\item $(G = SO(n)\times\mathbb{C}^{*},\; V = \mathbb{C}^{n})$
\item $(G = GL(n),\; V = \Lambda^{2} \mathbb{C}^{n})$%
, 
$n$ 
even
\item $(G = GL(n),\; V = S^{2}\mathbb{C}^{n} )$
\item $(G = Sp(n)\times GL(2), \; \left(\mathbb{C}^{2n}\right)^{2})$\text{.}
\end{itemize}

\noindent Actually, we are interesting to obtain a uniform proof which treated
all the cases at once, that is, the eight cases where 
$(G, V)$
is  of Capelli type with a one-dimensional quotient
(see Appendix A).\newline
The proof of this result is equivalent to the fact that any object in
$\mathrm{Mod}_{\Lambda }^{\mathrm{rh}}(\mathcal{D}_{V})$
is generated by its
$G'$%
-invariant global sections (see Theorem \ref{td}).\newline

It turns out that the equivalence between the categories
$\mathrm{Mod}_{\Lambda }^{\mathrm{rh}}(\mathcal{D}_{V})$
and
$\mathrm{Mod}^{\mathrm{gr}}(\mathcal{A})$
leads to a description of the "analytic" regular holonomic 
$\mathcal{D}_{V}$%
-modules in 
$\mathrm{Mod}_{\Lambda }^{\mathrm{rh}}(\mathcal{D}_{V})$
in terms of "algebraic homogeneous" 
$\mathcal{D}_{V}$%
-modules.
\newline 
 
By the way, we should note that the problem of classifying holomorphic regular holonomic
 $\mathcal{D}$%
 -modules or equivalently perverse sheaves on a complex manifold (thanks to the Riemann-Hilbert correspondence) has been treated by several authors.
The first such result (around 1980) was Deligne's quiver description
of perverse sheaves on an affine line with only possible singularity at the origin \cite{D1}, 
which under the Riemann-Hilbert correspondence is the case where
$G =\mathbb{C}^{\times}$
acts on 
$V=\mathbb{C}$
by scalar multiplication.
Deligne's description uses a characterization of constructible sheaves given in 
\cite{D2}, \cite{D3}. We should also mention the contribution
of  L.
Boutet de Monvel  \cite{BM}, who gave a description of holomorphic regular
holonomic 
$\mathcal{D}$%
-modules in one variable by using pairs of finite dimensional 
$\mathbb{C}$%
-vector spaces and certain linear maps.  A. Galligo,
M. Granger and P. Maisonobe \cite{G-G-M} obtained using the Riemann-Hilbert
correspondence, a classification of regular holonomic 
$\mathcal{D}_{\mathbb{C}^{n}}$%
-modules with singularities along the hypersurface 
$x_{1}\cdots x_{n} = 0$
 by 
 $2^{n}$%
 -tuples of
 $\ \mathbb{C}$%
 -vector spaces with a set of
linear maps. L. Narv\'{a}ez-Macarro \cite{Na} treated the case 
$y^{2}=x^{p}$
using the method of Beilinson and Verdier and generalized this study to the
case of reducible plane curves. R. MacPherson and K. Vilonen \cite{MP-V}
treated the case with singularities along the curve 
$y^{n}=x^{m}$%
. T. Braden and M. Grinberg \cite{B-G} studied perverse sheaves on complex
$n\times{n}$%
-matrices, symmetric matrices and 
$2n\times{2n}$%
-skew-symmetric matrices, each stratified by the rank. They gave an explicit description
of the category of such perverse sheaves as the category of the representations of a quiver.
In \cite{N0}, \cite{N1}, \cite{N5}, the author classified regular holonomic
$\mathcal{D}$%
-modules associated to the same stratification using 
$\mathcal{D}$%
-modules theoretical methods
etc.
This paper is organized as follows:\newline

\noindent In Section \ref{0}, we recall notions on the so called representations of Capelli type.
In section \ref{1}, we review some useful results: in particular the one's saying that: 
any coherent 
$\mathcal{D}_{V}$%
-module equipped with a good filtration, invariant under the action of the Euler vector field
$\theta$%
,
is generated by finitely many global sections of finite type for 
$\theta$%
.
Section \ref{2} deals with the concrete description of 
$\overline{\mathcal{A}}$
the algebra of 
$G'$%
-invariant differential operators following 
Benson - Ratcliff \cite{BR}, Howe - Umeda \cite{H-U1}, Knopp \cite{Kn}, and Levasseur results \cite[Theorem 4.11, p. 491]{L}.
In section \ref{3}, we establish the main result, namely Theorem \ref{MR}. This is done by means of the central Theorem \ref{td}
saying that: any object
$\mathcal{M}$ 
in the category
$\mathrm{Mod}_{\Lambda }^{\mathrm{rh}}(\mathcal{D}_{V})$
is generated by finitely many goblal 
$G'$%
-invariant sections.
This result leads to the equivalence of categories between the category 
$\mathrm{Mod}_{\Lambda}^{\mathrm{rh}}(\mathcal{D}_{V})$ 
and the category 
$\mathrm{Mod}^{\mathrm{gr}}(\mathcal{A})$:
the image by this equivalence of a regular holonomic 
$\mathcal{D}_{V}$%
-module being its set of
$\theta$%
-homogeneous global sections, which are invariant under the action of 
$G'$%
.
\newline
We refer the reader to  \cite{BM1}, \cite{HTT}, \cite{K1}, \cite{K2}, \cite{K3}, \cite{K-K}
for notions on 
$\mathcal{D}$%
-modules theory.

\section{Review on representations of Capelli type with one dimensional quotient}\label{0}

Let
$G$
be a connected reductive complex algebraic group. 
We denote by  
$G'$
its derived subgroup.\newline
Let
$ \rho: G \longrightarrow GL\left(V\right) $
be a finite dimensional representation of
$ G $%
, again denoted by
$ \left(G, V\right) $%
.
Recall that a polynomial 
$ f\in \mathbb{C}\left[V\right] $
is called a relative invariant of
$ \left(G, V\right) $
if there exists a rational character
$\chi \in \mathcal{X}\left(G\right)$
such that
$g\cdot{f}=\chi(g)f$
for all
$ g\in G $%
.
One says 
(see \cite[Chap. 2]{Ki}) that the representation
$ \left(G, V\right) $
is a (reductive) prehomogeneous vector space
if
$ G $
has an open dense orbit
$\Omega$
in
$ V $%
.
In that case, we denote the complement of the open dense orbit by
$ S:= V\backslash\Omega $%
,
it is called the singular set of
$ \left(G, V\right) $%
. 
Then, it is known 
(see \cite[p. 26, theorem 2.9]{Ki})
that, the one-codimensional irreducible components of
$ S $
are of the form
$ \left\{f_{i}=0\right\}, \, 1\leq i\leq r, $
for some relative invariants
$ f_{i} $%
.
The
$ f_{i} $
are algebraically independent, and are called the basic or fundamental relative invariants of
$ \left(G, V\right) $%
.
Note that, any relative invariant can be (up to non zero constant) written as
$ \prod_{i=1}^{r} f_{i}$%
.
When the singular set 
$S$ 
is an hypersurface, the prehomogeneous vector space
$ \left(G, V\right) $
is said to be regular (see \cite[p. 43, theorem 2.28]{Ki}).

\subsection{Multiplicity-free representations}

Let us denote by
$\mathfrak{g}$
the Lie algebra of the connected reductive Lie group
$G$%
,
and by
$\mathfrak{t}$
the Lie algebra of a maximal torus of
$G$%
. 
Denote by
$B$
the set of dominant weights lattices of
$\left(\mathfrak{g}, \mathfrak{t}\right)$%
.
For a fix finite-dimensional representation
$(G,V)$
of the reductive group
$G$%
, we recall that the action of
$G$
on
$V$
extends to the algebra of polynomials on 
$V$%
.
Then, the rational
$G$%
-module
$\mathbb{C}[V]$
decomposes as
\begin{equation}\label{one}
\mathbb{C}[V]\simeq\bigoplus_{\beta \in B}E(\beta)^{m(\beta)}\text{,}
\end{equation}
where
$E(\beta)$
is an irreducible 
$\mathfrak{g}$%
-module with highest weight
$\beta\in B$
and
$m(\beta)\in \mathbb{N}\cup\left\{\infty\right\}$%
.
We recall that the finite-dimensional linear representation
$(G,V)$
is said to be multiplicity-free (MF for short) if its associated representation of
$G$
on
$\mathbb{C}[V]$
decomposes without multiplicities. This means that each irreducible representation
$E(\beta)$
of
$G$
occurs at most once in
$\mathbb{C}[V]$%
. 
More precisely, we recall the following definition \cite[definition 4.1., p. 484]{L}:

\begin{definition}\label{d00}
The representation
$(G,V)$
is called multiplicity-free if in {\rm(\ref{one})}:
$m(\beta)\leq 1$
for all
$\beta$%
.
In this case
$$\mathbb{C}[V]=\bigoplus_{\beta \in B}V(\beta)^{m(\beta)}, \;\; m(\beta)=0, 1,$$
where
$V(\beta)$
is isomorphic to
$E(\beta)$%
.
\end{definition}
Note that, a classification of MF representations can be found in
\cite{BR},\cite{Ka}, \cite{Le}, and a complete list of irreducible MF representations is given in
\cite[table p. 612]{H-U1}
or
\cite[appendix, p. 508]{L}.

\subsubsection{Multiplicity-free spaces with one-dimensional quotient}
As above,
$G'$
is the derived subgroup of the complex Lie group
$G$%
. 
We recall the following definition:
\begin{definition}\label{d2}
{\rm (see Levasseur \cite{L} )} A mutiplicity-free-space
$(G, V)$
is said to have a one-dimensional quotient if there exists a non constant polynomial
$f_{0}\in \mathbb{C}[V]$
such that
$f_{0} \not\in \mathbb{C}[V]^{G}$%
,
and such that
$\mathbb{C}[V]^{G'}= \mathbb{C}[f_{0}]$%
.
\end{definition}

\subsection{Representations of "Capelli type"}

We continue with
$(G,V)$
the finite dimensional representation of the connected reductive Lie group
$G$
. 
We have denoted by 
$\mathfrak{g}=\text{Lie}\left(G\right)$
the Lie algebra of
$G$%
. 
We consider 
$\tau$ 
the differential of the
$G$%
-action
defined as follows:
\begin{equation}\label{diffmap1}
\tau:\mathfrak{g} \longrightarrow \Gamma(V, \mathcal{D})^{\rm{pol}}\text{,}
\end{equation}
where
$\Gamma(V, \mathcal{D})^{\rm{pol}}$
 is the algebra of global algebraic sections of
$\mathcal{D}_{V}$%
, i.e.
the algebra of polynomial coefficients differential operators. For any element 
$\xi$
in
$\mathfrak{g}$%
, the image
$\tau{(\xi)}$
is a linear derivation on
$\mathbb{C}\left[V\right]$
given by
\begin{equation}\label{diffmap2}
\tau(\xi)(\phi)(v)={\frac{d}{dt}}_{|t=0}\left(e^{t\xi}\cdot\phi\right)(v)={\frac{d}{dt}}_{|t=0}\phi{\left(e^{-t\xi}\cdot{v}\right)}\text{,}
\end{equation}
for all
$\phi\in \mathbb{C}[V]$%
,
$v\in V$%
.
This image is homogeneous of degree zero in the sense that
$\left[\theta , \tau{(\xi)}\right] = 0$%
.
Denote by
$U\left(\mathfrak{g}\right)$
the universal enveloping algebra of the Lie algebra
$\mathfrak{g}$%
.
The map
$\tau$
yields a homomorphism denoted again by
$\tau$%
, and defined by
\begin{equation}
\tau: U\left(\mathfrak{g}\right) \longrightarrow \Gamma(V, \mathcal{D}_{V})^{\rm pol}
\text{.}
\end{equation}

\noindent Recall that the group
$G$
acts naturally on
$\Gamma(V, \mathcal{D}_{V})^{\rm pol}$%
:
$\forall\;g \in G\text{,}\;\; \forall\; \phi \in \mathbb{C}[V]\text{,}\;\;\forall\; P\in 
\Gamma\left(V, \mathcal{D}_{V}\right)^{\mathrm{pol}}$%
,
\begin{equation}
 \left(g\cdot{P}\right)(\phi) = g\cdot{P}\left(g^{-1}\cdot{\phi}\right)\text{.}
\end{equation} 
The differential of this action is given by
$P \mapsto \left[\tau(\xi), P\right]$
for
$\xi \in \mathfrak{g},\;  P\in \Gamma(V, \mathcal{D}_{V})^{\rm pol}$%
.
Therefore, a subspace
$I\subset \Gamma(V, \mathcal{D}_{V})^{\rm pol}$ 
is stable under
$G$
(resp.
$G'$)
if and only if
$\left[\tau(\mathfrak{g}), I\right]\subset I$
(resp. 
$\left[\tau(\mathfrak{g}'), I\right]\subset I$).
Then, we know from 
\cite{L} that the subalgebra of polynomial coefficients
$G$%
-invariant differential operators
\begin{equation}
\Gamma(V, \mathcal{D}_{V})^{G}=\left\{ P\in \Gamma(V, \mathcal{D}_{V})^{\rm pol}:\; \left[\tau(\mathfrak{g}), P\right]=0\right\}
\end{equation}
is contained in the one's of
$G'$%
-invariant differential operators
\begin{equation}
\bar{\mathcal{A}}:=\Gamma(V, \mathcal{D}_{V})^{G'}=\left\{ P\in \Gamma(V, \mathcal{D}_{V})^{\rm pol}:\; 
\left[\tau(\mathfrak{g}), P\right]= 0\right\}\text{.}
\end{equation}

\noindent In particular, if
$Z\left(U(\mathfrak{g})\right)=U\left(\mathfrak{g}\right)^{G}$
is the center of
$U\left(\mathfrak{g}\right)$
then
\begin{equation}
\tau\left(Z\left(U(\mathfrak{g})\right)\right) \subset \Gamma\left( V,\mathcal{D}_{V}\right) ^{G}\text{.}
\end{equation}

\noindent Now, we give the following definition 
(see \cite[Definition 5.1.]{L}):

\begin{definition}\label{d3}
We say that the representation
$(G,V)$
is of Capelli type if:

\begin{itemize}
\item $(G,V)$ 
is irreducible and MF;
\item $\tau\left(Z\left(U(\mathfrak{g})\right)\right)=\Gamma \left( V,\mathcal{D}_{V}\right) ^{G}$%
.
\end{itemize}
\end{definition}

\begin{remark} In the list of irreducible MF representations
$\left(G, V\right)$
given by Howe and Umeda
{\rm (see \cite[table p. 612]{H-U1}
or
\cite[appendix p. 508]{L})},
there are exactly eight of them which are of Capelli type with one-dimensional quotient {\rm (see Appendix A)}.
\end{remark}

\section{Coherent 
$\mathcal{D}$%
-modules 
generated by their 
$\theta$%
-homogeneous global sections\label{1}}

We shall denote by 
$\mathcal{D}_{V}$
the sheaf of rings of differential operators on 
$V$ 
with holomorphic coefficients. If 
$x$ 
denotes a typical element of 
$V$%
, and
$\partial := \dfrac{\partial}{\partial{x}}$
its dual in
$\mathcal{D}_{V}$%
,
let
$\theta:= \text{Trace}{(x\partial)}$ 
be the Euler vector field on $V$%
.
\begin{definition}
Let $\mathcal{M}$ be a 
$\mathcal{D}_{V}$%
-module.
A section \
$u$ 
in 
$\mathcal{M}$
is said to be homogeneous if\ $\dim
_{\mathbb{C}}\mathbb{C}\left[ \theta \right] u<\infty $%
, i.e. the 
$\mathbb{C}$%
-vector space spanned by the set
$\{\theta^{n}u\; /\; n\geq 1\}$
is finite dimensional. 
The section 
$u$
is said to be homogeneous of degree 
$\lambda \in \mathbb{C}$%
,  if there exists 
$j\in \mathbb{N}$ 
such that 
$(\theta -\lambda )^{j}u=0$%
.
\end{definition}

\smallskip

\noindent Let us recall the following result which will be used later (see \cite[Theorem 1.3.]{N2}
):\smallskip

\begin{theorem}\label{tu} 
Let 
$\mathcal{M}$ 
be a coherent 
$\mathcal{D}_{V}$%
-module, equipped with a good filtration 
$\left( \mathcal{M}_{k}\right) _{k\in \mathbb{Z}}$
stable under the action of 
$\theta $%
.\ Then,\medskip \smallskip \newline
i) 
$\mathcal{M}$ 
is generated over 
$\mathcal{D}_{V}$ 
by finitely many homogeneous global sections, i.e.,
\begin{equation*}
\mathcal{M} = \mathcal{D}_{V}\left\lbrace s_{1}, \cdots , s_{k}\; \in \Gamma\left( V, \mathcal{M}\right),\; \dim_{\mathbb{C}}\mathbb{C}\left[\theta\right] s_{j} < \infty, \;  0\leq j \leq k\right\rbrace,
\end{equation*}
ii) For any 
$k$ 
$\in \mathbb{N}$%
,
$\lambda \in \mathbb{C}$%
, the vector space 
$\Gamma \left( V,\mathcal{M}_{k}\right) \bigcap \left[
\bigcup\limits_{p\in \mathbb{N}}\ker \left( \theta -\lambda \right) ^{p}%
\right] $ 
of homogeneous global sections in 
$\mathcal{M}_{k}$%
, of degree 
$\lambda$% 
, is finite dimensional.
\end{theorem}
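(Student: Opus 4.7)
The plan is to reduce the statement to a standard assertion about graded finitely generated modules over $\mathbb{C}[V]$. Since $(\mathcal{M}_{k})$ is a good filtration, there exists $k_{0}$ with $\mathcal{M}=\mathcal{D}_{V}\cdot\mathcal{M}_{k_{0}}$ and $\mathcal{M}_{k_{0}}$ an $\mathcal{O}_{V}$-coherent subsheaf; so for (i) it suffices to exhibit finitely many $\theta$-homogeneous elements of $\Gamma(V,\mathcal{M}_{k_{0}})$ which generate $\mathcal{M}_{k_{0}}$ over $\mathcal{O}_{V}$, as applying $\mathcal{D}_{V}$ then recovers all of $\mathcal{M}$. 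For (ii) it is enough, by exhaustiveness of the filtration, to treat a single fixed $k_{0}$.

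Next I would pass to global sections. Since $V\cong\mathbb{C}^{n}$ is Stein (in fact affine), Cartan's Theorem A together with the algebraic structure coming from the global good filtration gives that $M_{k_{0}}:=\Gamma(V,\mathcal{M}_{k_{0}})$ is finitely generated as a $\mathbb{C}[V]$-module. The $\theta$-stability hypothesis then yields $\theta\cdot M_{k_{0}}\subseteq M_{k_{0}}$, and $\theta$ acts on $\mathbb{C}[V]$ as the grading derivation of the standard $\mathbb{N}$-grading $\mathbb{C}[V]=\bigoplus_{d}\mathbb{C}[V]_{d}$. The core algebraic step is then a standard one: any finitely generated $\mathbb{C}[V]$-module carrying a derivation $\theta$ compatible with this positive grading admits a canonical generalized weight decomposition
$$M_{k_{0}}=\bigoplus_{\lambda}N(\lambda),\qquad N(\lambda)=\bigcup_{p\geq 0}\ker(\theta-\lambda)^{p},$$
with only finitely many cosets of $\mathbb{Z}$ in $\mathbb{C}$ supporting nonzero $N(\lambda)$, each $N(\lambda)$ finite dimensional, and with finitely many $\theta$-homogeneous $\mathbb{C}[V]$-generators (by Noetherianity plus a Jordan-form argument relative to a chosen $\theta$-stable finite-dimensional subspace containing generators). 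Granting this, homogeneous generators $s_{1},\dots,s_{k}$ of $M_{k_{0}}$ over $\mathbb{C}[V]$ yield (i), while finite-dimensionality of $N(\lambda)$ immediately gives (ii).

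The main obstacle I anticipate is the algebraization step, namely finite generation of $M_{k_{0}}$ as a $\mathbb{C}[V]$-module from the hypothesis that $(\mathcal{M}_{k})$ is a global good filtration on a holomorphic coherent $\mathcal{D}_{V}$-module together with $\theta$-stability. In general, coherent $\mathcal{O}_{V}$-modules on $\mathbb{C}^{n}$ are only countably generated by global sections. The point to exploit is that a $\theta$-finite holomorphic function on $V$ is automatically a polynomial, so the $\theta$-stability of the good filtration forces the relevant global sections to live in an algebraic/polynomial piece, after which Noetherianity of $\mathbb{C}[V]$ finishes the argument.
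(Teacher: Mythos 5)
First, a remark on the comparison you were asked for: the paper does not actually prove Theorem \ref{tu}; it imports it from the author's earlier work \cite[Theorem 1.3]{N2}, so there is no in-text argument to measure yours against. Judged on its own, your proposal has a genuine gap at its central step. The assertion that $M_{k_0}=\Gamma(V,\mathcal{M}_{k_0})$ is finitely generated over $\mathbb{C}[V]$ is false: already for $\mathcal{M}=\mathcal{O}_V$ with the $\theta$-stable good filtration $\mathcal{M}_k=\mathcal{O}_V$ for $k\geq 0$, the module $\Gamma(V,\mathcal{O}_V)$ of entire functions is not finitely generated over $\mathbb{C}[V]$ (the functions $e^{ax}$, $a\in\mathbb{C}$, are already linearly independent). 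Cartan's Theorem A gives generation of stalks by global sections, not algebraic finiteness of $\Gamma(V,-)$, and $\theta$-stability of the filtration does not repair this. You flag the problem yourself in the last paragraph, but the remedy you sketch (``a $\theta$-finite entire function is a polynomial'') is the first step of the real argument, not a substitute for it: what has to be proved is (a) that for each $\lambda$ the space $\Gamma(V,\mathcal{M}_k)\cap\bigcup_{p}\ker(\theta-\lambda)^{p}$ is finite dimensional, and (b) that the collection of all homogeneous global sections generates $\mathcal{M}_{k_0}$ over $\mathcal{O}_V$. Neither follows from the ``standard algebraic step'' you invoke, because that step presupposes exactly the algebraic finiteness that is in question.

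The missing ideas are local at the origin. One passes to the quotients $\mathcal{M}_{k_0,0}/\mathfrak{m}_0^{j}\mathcal{M}_{k_0,0}$ and their inverse limit: since $\theta$ preserves each $\mathfrak{m}_0^{j}\mathcal{M}_{k_0,0}$ and acts on $\mathfrak{m}_0^{j}/\mathfrak{m}_0^{j+1}$ with eigenvalue $j$, every finite-dimensional quotient splits into generalized $\theta$-eigenspaces and the generalized $\lambda$-eigenspace of the formal completion is finite dimensional; this yields (ii) once one also checks that a homogeneous global section with vanishing Taylor expansion at $0$ is zero. A Nakayama argument then produces finitely many homogeneous sections generating $\mathcal{M}_{k_0}$ in a neighborhood of $0$, and a conic-support argument finishes: the quotient of $\mathcal{M}$ by the $\mathcal{D}_V$-submodule they generate is coherent and $\theta$-stable, vanishes near $0$, hence has closed support invariant under the scaling flow and avoiding $0$, hence is empty. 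This last argument is also what legitimizes your opening reduction to a single global $k_0$ with $\mathcal{M}=\mathcal{D}_V\mathcal{M}_{k_0}$, which on a noncompact Stein manifold is not automatic. Without these three ingredients the proof does not close.
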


\smallskip

\begin{remark}
We will describe a holomorphic classification of regular holonomic $\mathcal{%
D}_{V}$%
-modules in $\mathrm{Mod}_{\Lambda}^{\mathrm{rh}}(\mathcal{D}_{V})$%
, but Theorem \ref{tu} permits to reduce these objects to "algebraic
homogeneous" 
$\mathcal{D}_{V}$%
-modules.
\end{remark}

\section{Algebras of invariant differential operators on a class of mutiplicity-free
spaces\label{2}}
As in the introduction,
$(G,V)$
is a finite-dimensional representation of a connected reductive Lie group
$G$
and 
$G':= [G, G]$
is the derived subgroup of
$G$%
.
Recall that the action of the group
$G$
extends to various algebras, namely
$\mathbb{C}[V]= S(V^{*})$
the algebra of polynomial functions on
$V$%
,
${\Gamma{\left(V, \mathcal{D}_{V}\right)}^{\mathrm{pol}}}$
the algebra of differential operators with polynomial coefficients in
$\mathbb{C}[V]$%
, and
$\mathbb{C}[V^{*}] = S(V)$
identified with differential operators with constant coefficients.
We thus obtain algebras of invariants:
$\mathbb{C}[V]^{G}$%
,
$S(V)^{G}$%
, and
$\Gamma\left(V, \mathcal{D}_{V}\right)^{G}$%
.\newline
If
$(G,V)$
is a prehomogeneous vector space, let 
$f_{0}, \cdots, f_{m}$
be its fundamental relative invariants and let 
$\chi_{j} \in \mathcal{X}(G)$%
,
$0 \leq j \leq m$%
,
be their weight.
There exist relative invariants
$f_{j}^{*}(\partial) \in S(V)$
with weight
$\chi_{j}^{-1}$%
,
$0 \leq j \leq m$ 
(see \cite[Section 3.1]{L}).
We set 
$\Delta_{j}:= f_{j}^{*}(\partial)$
for
$j= 0, \cdots, m$%
.\newline
It is known
that the algebra
$\mathbb{C}[V]^{G'}$
of
$G'$%
-invariant polynomials is a polynomial ring
\begin{equation}
\mathbb{C}[V]^{G'} = \mathbb{C}[f_{0}, \cdots, f_{m}],
\end{equation}
and that
\begin{equation}
S(V)^{G'} = \mathbb{C}[\Delta_{0}, \cdots, \Delta_{m}]
\end{equation}
(see \cite[Lemma 4.2, (d) and formula (4.3) p. 487]{L}).\newline
Consider the following multiplication map
\begin{equation}
\begin{array}{cccc}
 m:& \mathbb{C}[V]\otimes S(V) & \longrightarrow &
\Gamma{\left(V, \mathcal{D}_{V}\right)^{\mathrm{pol}}}\\&&&\\
& \phi\otimes f &\longmapsto & \phi f(\partial)\text{.}
\end{array}
\end{equation}
One knows from
Howe - Umeda
\cite{H-U1}
that through this map the
$\left(\mathbb{C}[V], G\right)$%
-module
$\Gamma\left(V, \mathcal{D}_{V}\right)^{\mathrm{pol}}$
identifies with
$\mathbb{C}[V]\otimes S(V)$:
\begin{equation}
 \Gamma\left(V, \mathcal{D}_{V}\right)^{\mathrm{pol}} \simeq \mathbb{C}[V]\otimes S(V)
\end{equation}
where the group
$G$
acts on 
$\Gamma\left(V, \mathcal{D}_{V}\right)^{\mathrm{pol}}$
as follows:
$ \forall\; \phi \in \mathbb{C}[V],\;\;\forall\; P\in 
\Gamma\left(V, \mathcal{D}_{V}\right)^{\mathrm{pol}}$

\begin{equation}
 \left(g\cdot{P}\right)(\phi) = g\cdot{P}\left(g^{-1}\cdot{\phi}\right)\text{.}
\end{equation}
First, we are interesting in the description of the algebras of
$G$%
-invariant differential operators on a multiplicity-free space following the work by
Benson - Ratcliff \cite{BR}, Howe - Umeda \cite{H-U1}, Knopp \cite{Kn}
and Levasseur \cite{L}.
Actually, the isomorphism 
$m$
is 
$G$%
-invariant,
hence the algebra of
$G$%
-invariant
differential operators decomposes as a direct sum of one-dimensional irreducible
$G$%
-modules
$\mathbb{C}E_{\gamma}$:

\begin{equation}
\Gamma{\left(V, \mathcal{D}_{V}\right)}^{G} = 
\bigoplus_{\gamma\in \Gamma} \mathbb{C}E_{\gamma}
\end{equation}
where
$\Gamma$
is the set of dominant weights lattices of the pair 
$\left(\mathfrak{g}, \mathfrak{t}\right)$  
of the Lie algebras of
$G$
and of a maximal torus of
$G$
respectively.\newline

Let
\begin{equation}
 E_{\gamma}\left(x, \partial_{x}\right) := \dfrac{1}{\dim_{\mathbb{C}}E_{\gamma}}
m\left(E_{\gamma}\right)\qquad
\in
\quad\Gamma{\left(V, \mathcal{D}_{V}\right)}^{G}
\end{equation}
be the operator corresponding to
$E_{\gamma}$%
.
The operators
$E_{\gamma}\left(x, \partial_{x}\right)$
are called the normalized Capelli operators.
Put
\begin{equation}
E_{j}:= E_{\lambda_{j}}\left(x, \partial_{x}\right) \qquad 0\leq j \leq r.
\end{equation}

We know from \cite[Proposition 7.1]{H-U1}
that the given of a multiplicity-free representation
is equivalent to the given of a commutative algebra of
$G$%
-invariant differential operators:
\begin{equation}
(G : V) \quad \text{multplicity-free}\quad \Longleftrightarrow\quad
\Gamma{\left(V, \mathcal{D}_{V}\right)}^{G}\quad
\text{commutative.} 
\end{equation}
In that case the algebra
$\Gamma{\left(V, \mathcal{D}_{V}\right)}^{G}$
is generated by the normalized Capelli operators
$E_{j}$
for
$0\leq j \leq r $
(see \cite[Theorem 9.1]{H-U1} or \cite[Corollary 7.4.4]{BR}):

\begin{theorem}{\rm (Howe - Umeda)}. For a fix multiplicity-free representation
$(G , V)$%
, the algebra
$$\Gamma{\left(V, \mathcal{D}_{V}\right)}^{G} = \mathbb{C}\left[
E_{0}, \cdots, E_{r}\right]$$
is a commutative polynomial ring.
\end{theorem}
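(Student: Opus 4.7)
The plan is to combine the $G$-equivariant decomposition of $\Gamma(V,\mathcal{D}_V)^G$ recalled in the excerpt with the structure theorem for the weight monoid of a multiplicity-free space, and then to transfer that structure to a polynomial presentation via the principal-symbol filtration. The decomposition $\Gamma(V,\mathcal{D}_V)^G=\bigoplus_{\gamma\in\Gamma}\mathbb{C}E_\gamma$ is already in hand: through the $G$-equivariant isomorphism $m:\mathbb{C}[V]\otimes S(V)\xrightarrow{\sim}\Gamma(V,\mathcal{D}_V)^{\mathrm{pol}}$, multiplicity-freeness of $\mathbb{C}[V]$ together with Schur's lemma forces each isotypic pairing $V(\gamma)\otimes V(\gamma)^{\ast}$ to contribute exactly one line to the invariants, namely $\mathbb{C}E_\gamma$. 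Commutativity follows from the Howe--Umeda equivalence stated just before the theorem. What remains is to identify this commutative algebra as a polynomial ring on $r+1$ generators.

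Next I would invoke the classical fact (Vinberg--Kimelfeld, Knop) that for a multiplicity-free space the weight monoid $\Gamma$ is a \emph{free} finitely generated commutative monoid. Concretely, if $U\subset G$ is a maximal unipotent subgroup, then the algebra $\mathbb{C}[V]^U$ is a polynomial ring on fundamental semi-invariants, and $\Gamma=\mathbb{N}\lambda_0\oplus\cdots\oplus\mathbb{N}\lambda_r$, where $\lambda_0,\dots,\lambda_r$ are their highest weights. The designated generators $E_j=E_{\lambda_j}(x,\partial_x)$ then correspond precisely to these fundamental weights, and every $E_\gamma$ is indexed by a unique $(n_0,\dots,n_r)\in\mathbb{N}^{r+1}$ with $\gamma=\sum n_j\lambda_j$.

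To conclude I would pass to the associated graded of $\Gamma(V,\mathcal{D}_V)^G$ with respect to the order filtration on $\mathcal{D}_V$. The principal symbol map yields a $G$-equivariant graded injection
\begin{equation*}
\mathrm{gr}\,\Gamma(V,\mathcal{D}_V)^G\;\hookrightarrow\;(\mathbb{C}[V]\otimes S(V))^G,
\end{equation*}
and under it the symbols $\sigma(E_j)$ correspond precisely to the monoid generators of $\Gamma$. Freeness of $\Gamma$ then forces $\sigma(E_0),\dots,\sigma(E_r)$ to be algebraically independent and to generate the associated graded ring as a polynomial algebra. Since the $E_j$ mutually commute, a standard lifting argument---writing each $E_\gamma$ with $\gamma=\sum n_j\lambda_j$ as $\prod E_j^{n_j}$ up to strictly lower-order invariant terms and iterating---shows simultaneously that the $E_j$ generate $\Gamma(V,\mathcal{D}_V)^G$ and that no nontrivial algebraic relation among them can exist. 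The main conceptual obstacle is the invocation of freeness of the weight monoid $\Gamma$: this is a deep consequence of multiplicity-freeness (via the polynomiality of $\mathbb{C}[V]^U$), but once granted, everything else is bookkeeping on the symbol side.
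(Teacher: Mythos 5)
The paper does not actually prove this statement: it is quoted as a known theorem of Howe--Umeda, with pointers to \cite[Theorem 9.1]{H-U1} and \cite[Corollary 7.4.4]{BR}, so there is no internal proof to compare against. Your reconstruction is a legitimate proof outline, but it follows the Knop/Benson--Ratcliff line (pass to the symbol algebra $\mathbb{C}[V\oplus V^{*}]^{G}$ and exploit freeness of the weight monoid) rather than Howe and Umeda's original argument, which instead uses the ``eigenvalue'' (Harish--Chandra type) homomorphism sending an invariant operator to the function $\beta\mapsto(\text{scalar by which it acts on }V(\beta))$ and identifies the image with a polynomial ring. Both routes rest on the same deep input you correctly isolate, namely that $\mathbb{C}[V]^{U}$ is a polynomial ring so that $\Gamma$ is a free monoid on $\lambda_{0},\dots,\lambda_{r}$. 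One step in your sketch is compressed to the point of hiding the real work: freeness of $\Gamma$ alone does not force $\sigma(E_{0}),\dots,\sigma(E_{r})$ to generate a polynomial ring, because the product $e_{\gamma}e_{\gamma'}$ of two symbol lines is a priori a combination $\sum_{\delta}c_{\delta}e_{\delta}$ over all $\delta$ occurring in both $V(\gamma)V(\gamma')$ and $V(\gamma)^{*}V(\gamma')^{*}$. What is needed is the triangularity statement that $c_{\gamma+\gamma'}\neq 0$ (product of highest weight vectors is a nonzero highest weight vector) and that all other contributing $\delta$ are strictly lower in the dominance order; only then does induction over $\Gamma$ give both generation and algebraic independence, and the lifting to the filtered algebra $\Gamma(V,\mathcal{D}_{V})^{G}$ goes through as you describe. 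With that point made explicit, your argument is a correct and standard proof of the cited theorem; note also that since $G$ is reductive the symbol map is an isomorphism $\mathrm{gr}\,\Gamma(V,\mathcal{D}_{V})^{G}\cong\mathbb{C}[V\oplus V^{*}]^{G}$, not merely an injection.
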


From now on, we focus our attention in the subalgebras of 
$G$
(resp. 
$G'$%
)-invariant global algebraic sections
of
$\mathcal{D}_{V}$
on
multiplicity-free representations with a one-dimensional quotient.

\subsection{Invariant differential operators on multiplicity-free spaces with one dimensional
quotient}

Recall that
$G'$
denotes the derived subgroup of
$G$%
.
Recall also that a multplicity-free representation
$(G , V)$
is said to be with one-dimensional quotient
if there exists a polynomial function
$f \in \mathbb{C}[V]$
such that
\begin{equation}
 {\mathbb{C}[V]}^{G'}= \mathbb{C}[f]\qquad \text{and} \qquad  f\;\not\in \; {\mathbb{C}[V]}^{G}\text{.}
\end{equation}
In fact, the polynomial function
$f$
is a relative invariant of degree
$n$
of weight
$\chi \in \mathcal{X}(G)$%
,
and there exists an associated relative invariant differential operator
$f^{*} := f(\partial)\; \in\; \mathbb{C}[V^{*}]$
of degree 
$n$
with weight
$\chi^{-1}$%
.
More precisely, set
$\Delta := f^{*}(\partial)$%
.
We know from Sato - Bernstein - Kashiwara (see \cite[Proposition 2.22]{Ki} and \cite{K0}) that there 
exists a polynomial
$b(s) \in \mathbb{R}[s]$
of degree 
$n$
called the Bernstein - Sato polynomial such that:

\begin{equation}
\begin{array}{ccccc}
i) & b(s) & = & c\prod _{j=0}^{n-1}(s + \lambda_{j} + 1), &  c > 0; \cr  \\
 \\
ii) &\Delta{(f^{s+1})} & = & b(s)f^{s}; \qquad\qquad & \qquad \qquad \cr\qquad \\
\\
iii) & \lambda_{j+1}\in & \mathbb{Q}^{*}{}^{+}, & 0\leq j \leq n-1,&  \lambda_{0} = 0\text{.}  \cr\\
\end{array}
\end{equation}
Set
\begin{equation}
f := f_{0} \quad
\text{and}\quad
\Delta := \Delta_{0} = f^{*}(\partial)
\text{.}
\end{equation}
Following T. Levasseur \cite[Section 4.2]{L}, recall that if
$(G,V)$
is a multiplicity-free representation of one-dimensional quotient
then we have
\begin{equation}
\mathbb{C}[V]^{G'} = \mathbb{C}[f],\qquad S(V)^{G'}=\mathbb{C}[V^{*}]^{G'} = \mathbb{C}[\Delta]\qquad
\text{and} \qquad E_{0} = f\Delta \text{.}
\end{equation}
Now, consider
$\overline{\mathcal{A}}: = \Gamma\left( V, \mathcal{D}_{V}\right)^{G'}$
the algebra of
$G'$%
-invariant
(polynomial coefficients) differential operators on 
$V$:
\begin{equation} 
\overline{\mathcal{A}} \; \supset \; \Gamma\left( V, \mathcal{D}_{V}\right)^{G}\quad
\text{and}\quad
\mathcal{J}:= \left\lbrace P \in \Gamma\left( V, \mathcal{D}_{V}\right)^{G}/ \quad Pf^{m}= 0\quad
\text{for all}\;\; m\in \mathbb{N} \right\rbrace \subset \overline{\mathcal{A}}
\end{equation}
is the annihilator of the 
$G'$%
-invariant polynomial functions on
$V$%
.\newline
Recall that 
$\theta$
denotes the Euler vector field on
$V$%
,
$\theta \in \Gamma\left( V, \mathcal{D}_{V}\right)^{G}$%
.
T. Levasseur
\cite[Lemma 4.10]{L}
proved that: for any 
$G$%
-invariant differential operator
$P\in \Gamma\left( V, \mathcal{D}_{V}\right)^{G}$
, there exists an associated Bernstein-Sato polynomial
$b_{P}(s)\in \mathbb{C}[s]$
such that the operator
$P - b_{P}(\theta)$
belongs to
$\mathcal{J}$%
.
In particular, one can find a polynomial
$b_{E_{j}}(s)$
associated with each Capelli operator
$E_{j}$%
, 
$0\leq j\leq r$%
, such that if we consider
$\Omega_{j}$
to be
\begin{equation}
\Omega_{j}:= E_j - b_{E_{j}}(\theta)\;\in\; \mathcal{J}\quad \text{for}\;\; j= 0, \cdots ,r,
\end{equation}
then we obtain the following results \cite[Theorem 4.11, (i), (v)]{L}:

\begin{theorem} If 
$(G,V)$
is a fix multplicity-free representation with one-dimensional quotient,
then
\begin{equation}
 \overline{\mathcal{A}} =  \mathbb{C}\left\langle{f, \Delta, \theta, \Omega_{1}, \cdots ,\Omega_{r} }\right\rangle ,
\end{equation}
\end{theorem}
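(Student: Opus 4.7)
The plan is to exploit the $G/G'$-grading on $\overline{\mathcal{A}}$ together with Howe-Umeda's polynomial description of $\Gamma(V,\mathcal{D}_V)^G$. Since $G/G'$ is a central torus acting on the $G'$-invariant algebra $\overline{\mathcal{A}}$, the latter decomposes as a direct sum of weight spaces indexed by characters of $G/G'$:
\begin{equation*}
\overline{\mathcal{A}} \;=\; \bigoplus_{\chi\in\mathcal{X}(G/G')}\overline{\mathcal{A}}^{(\chi)},\qquad \overline{\mathcal{A}}^{(1)}=\Gamma(V,\mathcal{D}_V)^G.
\end{equation*}
Each proposed generator clearly lies in $\overline{\mathcal{A}}$: $f$ and $\Delta=f^{*}(\partial)$ are $G'$-invariant operators of opposite $G$-weights $\chi_f^{\pm 1}$; $\theta\in\Gamma(V,\mathcal{D}_V)^G$; and $\Omega_j=E_j-b_{E_j}(\theta)$ lies in $\mathcal{J}\subset\Gamma(V,\mathcal{D}_V)^G$ by construction. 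By the Howe-Umeda theorem cited above, the trivial weight space is $\overline{\mathcal{A}}^{(1)}=\mathbb{C}[E_0,\ldots,E_r]$; since $E_j=b_{E_j}(\theta)+\Omega_j$ for every $j$, and $E_0=f\Delta$ gives $\Omega_0=f\Delta-b_{E_0}(\theta)\in\mathbb{C}\langle f,\Delta,\theta\rangle$, this trivial component already sits inside $\mathbb{C}\langle f,\Delta,\theta,\Omega_1,\ldots,\Omega_r\rangle$.

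The core step is to identify the non-trivial weight spaces. The claim I would establish is that the only characters of $G/G'$ appearing in $\overline{\mathcal{A}}$ are the integer powers $\chi_f^k$, and that
\begin{equation*}
\overline{\mathcal{A}}^{(\chi_f^k)} = f^k\cdot\overline{\mathcal{A}}^{(1)}\quad(k\geq 0),\qquad \overline{\mathcal{A}}^{(\chi_f^k)} = \Delta^{-k}\cdot\overline{\mathcal{A}}^{(1)}\quad(k\leq 0).
\end{equation*}
I would deduce this from the $G$-module isomorphism $\Gamma(V,\mathcal{D}_V)^{\mathrm{pol}}\simeq\mathbb{C}[V]\otimes S(V)$ together with the multiplicity-free decomposition $\mathbb{C}[V]=\bigoplus_\beta V(\beta)$ and its dual in $S(V)$. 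The space of $G'$-invariants in $V(\beta)\otimes V(\gamma)^{*}$ vanishes unless $V(\beta)\simeq V(\gamma)$ as $G'$-modules, which by Schur's lemma and centrality of $G/G'$ forces $V(\beta)$ and $V(\gamma)$ to differ by a $G/G'$-character; the one-dimensional-quotient hypothesis $\mathbb{C}[V]^{G'}=\mathbb{C}[f]$ then restricts such characters to powers of $\chi_f$. Surjectivity of the multiplication maps $f^k\cdot(-)$ and $\Delta^{-k}\cdot(-)$ between weight spaces follows again from multiplicity-freeness of the pairings.

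Granting this identification, every $P\in\overline{\mathcal{A}}$ splits into finitely many weight components, each of the form $f^k Q$ or $\Delta^{-k} Q$ with $Q\in\mathbb{C}[E_0,\ldots,E_r]$, and hence lies in $\mathbb{C}\langle f,\Delta,\theta,\Omega_1,\ldots,\Omega_r\rangle$. The main obstacle I expect is the identification of the non-trivial weight spaces themselves: showing both that no $G/G'$-characters outside $\langle\chi_f\rangle$ appear and that multiplication by $f^k$ (resp.\ $\Delta^{-k}$) surjects onto the corresponding weight space. This is where one must invoke the full Benson-Ratcliff/Howe-Umeda/Knop-Levasseur machinery on the fine structure of multiplicity-free representations with one-dimensional quotient.
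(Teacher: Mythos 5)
The paper does not actually prove this statement: it is quoted directly from Levasseur \cite[Theorem 4.11, (i), (v)]{L}, so there is no internal proof to compare against. Your outline is, in substance, a reconstruction of Levasseur's own argument: the weight decomposition of $\overline{\mathcal{A}}=\Gamma(V,\mathcal{D}_V)^{G'}$ under the residual torus $G/G'$, the identification of the trivial weight space with $\Gamma(V,\mathcal{D}_V)^{G}=\mathbb{C}[E_0,\dots,E_r]$ via Howe--Umeda, and the identification of the weight-$\chi_f^{\pm k}$ components with $f^{k}\,\Gamma(V,\mathcal{D}_V)^{G}$ and $\Gamma(V,\mathcal{D}_V)^{G}\,\Delta^{k}$. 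Your bookkeeping for why the trivial component lies in the proposed subalgebra (namely $E_0=f\Delta$ and $E_j=b_{E_j}(\theta)+\Omega_j$) is correct. The substantive caveat is the step you yourself flag as the main obstacle: that the only $\mathcal{X}(G/G')$-weights occurring are powers of $\chi_f$, and that multiplication by $f^{k}$ (resp.\ $\Delta^{k}$) surjects onto the corresponding weight space. This is precisely the content of Levasseur's Lemma 4.2 and Theorem 4.11 and is not carried out here; in particular, the assertion that $V(\beta)\simeq V(\gamma)$ as $G'$-modules forces $\beta-\gamma\in\mathbb{Z}\chi_f$ requires the freeness of the weight monoid on $\lambda_0,\dots,\lambda_r$ together with the fact that $\lambda_0$ (the weight of $f$) is the unique generator whose isotypic component is one-dimensional --- it does not follow from the bare hypothesis $\mathbb{C}[V]^{G'}=\mathbb{C}[f]$ alone. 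So your proposal follows the route of the cited source but, like the paper itself, ultimately defers the decisive computation to that source.
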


\begin{equation}
\mathcal{J}  =  \Sigma_{j=1}^{r}\overline{\mathcal{A}}\Omega_{j}\text{.}
\end{equation}
Note that, the operators
$f$
and
$\Delta$
do not commute nor do not commute with the operators
$\Omega_{1}, \cdots ,\Omega_{r}$%
.\newline
By the way, using these results, T. Levasseur
\cite[Theorem 4.15]{L}
gives a duality (of Howe type) correspondence between (multplicity-free) 
representations (with a one-dimensional quotient) of
$G$
and lowest weight modules over the Lie algebra generated by
$f$
and
$\Delta$
(which is infinite dimensional when the degree of
$f$ is
$\geq 3$).
Actually, this duality recovers and extends results obtained by H. Rubenthaler
when the representation
$(G, V)$
is of "commutative parabolic type"
(see \cite[Proposition 4.2]{R0}
and also
\cite[Corollary 4.5.17]{G-W}).\newline
We should note that when 
$(G, V)$
is irreducible, then
\begin{equation}
\Omega_{r} = 0\text{,} \quad
\text{the two sided ideal}\quad
\mathcal{J}= \Sigma_{j=0}^{r-1}\overline{\mathcal{A}}\Omega_{j} = 
\Sigma_{j=0}^{r-1}\Omega_{j}\overline{\mathcal{A}}
\text{,\quad and}
\end{equation}

\begin{equation}
 \overline{\mathcal{A}} =  \mathbb{C}\left\langle{f, \Delta, \theta, \Omega_{1}, \cdots ,\Omega_{r-1} }\right\rangle\text{.}
\label{Result}
\end{equation}
In the case
$(GL(n, \mathbb{R}), S^{2}(\mathbb{R}^{n}))$
of the real general linear group action on real symmetric matrices, M. Muro proved this formula in
\cite[Proposition 2.1, p. 356]{Mu}.
When
$(G, V) = (GL(n, \mathbb{C})\times SL(n, \mathbb{C}), M_{n}(\mathbb{C}))$%
,
$(GL(2m, \mathbb{C}),\; \Lambda^{2} \mathbb{C}^{2m})$%
,
$(GL(n, \mathbb{C}),\; S^{2}\mathbb{C}^{n} )$%
, this non commutative algebra is obtained with explicit relations in
\cite[Proposition 5, p. 637-638 ]{N1}, \cite[Proposition 6, p. 120 ]{N0}, \cite[Proposition 8, p. 4]{N5}.
 Actually,
the result (\ref{Result}) generalizes the one's of H. Rubenthanler
(see \cite[Proposition 3.1]{R1} or
\cite[Theorem 5.3.3.]{R2})
obtained when
$(G, V)$
is an irreducible regular prehomogeneous representation of commutative parabolic type.
We have the following proposition.

\begin{proposition}\label{pu} Let
$(G,V)$
be an irreducible multiplicity-free representation with a one-dimensional quotient.
The following relations hold in the quotient algebra
$\overline{\mathcal{A}}/\mathcal{J}$:\newline
\begin{eqnarray}
\left[ \theta, f \right] & = & nf \text{,}\label{a0}
\\
\left[\theta, \Delta \right] & = & -n\Delta  \text{,}\label{b0}
\\
f\Delta & = & c\dfrac{\theta}{n}(\dfrac{\theta}{n}+\lambda_{1})\cdots(\dfrac{\theta}{n}+\lambda_{n-1}) \text{,}\quad c>0 \label{c0}
\\
\Delta{f} & = & c(\dfrac{\theta}{n}+1)(\dfrac{\theta}{n}+\lambda_{1}+1)\cdots(\dfrac{\theta}{n}+\lambda_{n-1}+1) \text{,} \label{d0}
\\
f_{j}\Delta_{j} & = & c_{j}\dfrac{\theta}{n}(\dfrac{\theta}{n}+\lambda_{1})\cdots(\dfrac{\theta}{n}+\lambda_{n-j-1}),\;\;
c_{j} > 0,\quad 0\leq j\leq r \label{e0}
\end{eqnarray}
where 
$\lambda_{k} \in \mathbb{Q}$
for 
$k = 0,\cdots, n-1$
\end{proposition}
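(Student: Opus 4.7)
The plan is to derive the five relations from two inputs: the elementary commutator of the Euler field $\theta$ with homogeneous operators, and the Bernstein--Sato equation $\Delta f^{s+1} = b(s) f^s$ with $b(s) = c\prod_{j=0}^{n-1}(s+\lambda_j+1)$ and $\lambda_0 = 0$. The guiding principle is Lemma 4.10 of \cite{L} recalled just before the proposition, which says that every $G$-invariant operator $P$ agrees modulo $\mathcal{J}$ with a polynomial $b_P(\theta)$ in the Euler field; one therefore only needs to compute $b_P$ explicitly for $P \in \{f\Delta,\; \Delta f,\; f_j\Delta_j\}$.

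First I would dispose of (\ref{a0}) and (\ref{b0}) by direct commutator computation, using that $\theta$ acts as the grading on $\Gamma(V,\mathcal{D}_V)^{\mathrm{pol}}$, positive on $\mathbb{C}[V]$ and negative on constant-coefficient operators. Since $f$ is multiplication by a degree-$n$ polynomial, $[\theta,f]g = \theta(fg) - f\theta(g) = \theta(f)\,g = nfg$, so $[\theta,f] = nf$. Dually, $\Delta = f^{*}(\partial)$ is a constant-coefficient operator of order $n$, and the corresponding symbol computation gives $[\theta,\Delta] = -n\Delta$. These identities already hold in $\overline{\mathcal{A}}$, not merely modulo $\mathcal{J}$.

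For (\ref{c0}) and (\ref{d0}), the central observation is that $\theta/n$ acts on $f^m$ by the scalar $m$. Evaluating Bernstein--Sato at $s=m$ gives $\Delta(f^{m+1}) = b(m) f^m$, which reads $(\Delta f) f^m = b(m) f^m$; since both $\Delta f$ and $b(\theta/n) = c(\theta/n+1)(\theta/n+\lambda_1+1)\cdots(\theta/n+\lambda_{n-1}+1)$ are $G$-invariant and agree on every $f^m$, their difference lies in $\mathcal{J}$, which is (\ref{d0}). For (\ref{c0}), multiplying Bernstein--Sato on the left by $f$ yields $(f\Delta)f^{m+1} = b(m) f^{m+1}$, equivalently $(f\Delta) f^m = b(m-1) f^m = c\, m(m+\lambda_1)\cdots(m+\lambda_{n-1}) f^m$ for $m \geq 1$; the boundary case $m=0$ is consistent because $f\Delta$ kills $1$ (as $\Delta$ has positive order) and $c(\theta/n)(\theta/n+\lambda_1)\cdots(\theta/n+\lambda_{n-1})$ vanishes at $\theta/n = 0$. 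Hence $f\Delta - c(\theta/n)(\theta/n+\lambda_1)\cdots(\theta/n+\lambda_{n-1}) \in \mathcal{J}$.

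Relation (\ref{e0}) is handled by the same mechanism applied to each fundamental relative invariant pair $(f_j,\Delta_j)$, using its own $b$-function. The main obstacle I anticipate is not algebraic but organisational: verifying that each $b$-function actually factors as the stated truncated product $c_j \prod_{k=0}^{n-j-1}(\theta/n+\lambda_k)$ with the correct index, and confirming $G$-invariance of each operator whose difference is to be shown to lie in $\mathcal{J}$ before invoking Lemma 4.10 of \cite{L}. Once this bookkeeping is done, every relation reduces to the single principle that $\theta/n$ acts as multiplication by $s$ on $f^s$, combined with the Bernstein--Sato identity.
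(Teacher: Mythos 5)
Your proposal is correct and follows essentially the same route as the paper: both reduce (\ref{c0})--(\ref{e0}) to the Bernstein--Sato identity together with the fact that a ($G'$-invariant) operator agreeing with a polynomial in $\theta$ on all powers of $f$ differs from it by an element of $\mathcal{J}$ (the paper phrases this via Levasseur's Capelli operators $E_j$ and the formula $b_{E_j}(s)=b_j(s-1)$, which is exactly your ``multiply by $f$ and shift'' computation). Your explicit check of the boundary case $m=0$ in (\ref{c0}) is a detail the paper leaves implicit, but it does not change the argument.
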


\begin{proof}
We should note that by \cite[Remark 4.12, (2)]{L},
we have the homogeneity of degree
$n$
(resp.
$-n$)
of the polynomial
$f$
(resp.
$\Delta$%
),
that is,
the formula
(\ref{a0}),
(\ref{b0}).\newline
Recall that
$\Omega_{j}:= E_j - b_{E_{j}}(\theta)\;\in\; \mathcal{J}$%
, 
for\; 
$j= 0, \cdots ,r$%
,
so we clearly have
\begin{equation}
E_j = b_{E_{j}}(\theta)\;\;
\text{in}\;\;
\overline{\mathcal{A}}/\mathcal{J}\label{clear}.
\end{equation}
Recall also that from
\cite[p. 490]{L},
we have
$E_{0}= f\Delta$
and 
$b_{E_{0}}(s)= b(s-1)$
where
$b(s)= c(s+1)(s+\lambda_{1} + 1)\cdots (s + \lambda_{n-1} + 1)$
is the 
$b$%
-function of
$f$%
. 
Then, using this last in (\ref{clear}), we get (\ref{c0})
$$ f\Delta  =  c\dfrac{\theta}{n}(\dfrac{\theta}{n}+\lambda_{1})\cdots (\dfrac{\theta}{n}+\lambda_{n-1}) \quad \text{in} \quad 
\overline{\mathcal{A}}/\mathcal{J}\text{.}$$
Next, since
$\Delta f^{s+1} = b(s)f^{s}$%
, that is,
$(\Delta{f}) f^{s} = b(s)f^{s}$
we get the formula (\ref{d0}):
$$\Delta{f} = b(\theta)\mod  \mathcal{J}\text{.}$$
More generally, we may take
$E_{j} = f_{j}\Delta_{j}$
and using (\ref{clear}) we get
$$
f_{j}\Delta_{j} = b_{E_{j}}(\theta)\;\;
\text{in}\;\;
\overline{\mathcal{A}}/\mathcal{J}
$$
with
$b_{E_{j}}(s) = b_{j}(s-1) = c_{j}s(s+\lambda_{1})\cdots (s + \lambda_{n-j-1})$%
,
$\;c_{j} > 0$%
, 
$\;0\leq j\leq r$%
, that is, the formula
(\ref{e0}).
\end{proof}\newline

Let 
$\mathcal{K}$
be the ideal of
$\overline{\mathcal{A}}/\mathcal{J}$
defined by the relations 
(\ref{a0}), (\ref{b0}), (\ref{c0}), (\ref{d0})
of Proposition \ref{pu}.
Then the preimage of
$\mathcal{K}$
under the quotient map
$\overline{\mathcal{A}} \longrightarrow \overline{\mathcal{A}}/\mathcal{J}$ 
is an ideal of
$\overline{\mathcal{A}}$
containing properly
$\mathcal{J}$%
.
Let us denote by 
$\overline{\mathcal{J}}$
the preimage in
$\overline{\mathcal{A}}$ 
of the ideal
$\mathcal{K}$%
. 
Denote by 
$\mathcal{A}$ 
the quotient algebra of 
$\overline{\mathcal{A}}$
by 
$\overline{\mathcal{J}}$:
\begin{equation}
\mathcal{A}:=\overline{\mathcal{A}}/\overline{\mathcal{J}}\text{.}
\end{equation}
We have the following corollary
which is a particular case of T. Levasseur's result in
\cite[Theorem 3.9, p. 483]{L} or H. Rubenthaler
\cite[Theorem 2.8, p. 1345]{R1}, \cite[Theorem 7.3.2, p. 37]{R2}:

\begin{corollary}
\label{cu}The quotient algebra $\mathcal{A}$ is generated by $f ,\theta, 
\Delta$ 
satisfying the relations (\ref{a0}), (\ref{b0}), (\ref{c0}), (\ref{d0}): 
\begin{eqnarray*}
\left[ \theta, f \right] & = & nf \text{,} \\
\left[\theta, \Delta \right] & = & -n \text{,}\Delta\\
f\Delta & = &  c\dfrac{\theta}{n}(\dfrac{\theta}{n}+\lambda_{1})\cdots(\dfrac{\theta}{n}+\lambda_{n-1}) \text{,} \\
\Delta{f} & = & c(\dfrac{\theta}{n}+1)(\dfrac{\theta}{n}+\lambda_{1}+1)\cdots(\dfrac{\theta}{n}+\lambda_{n-1} +1)\text{.} 
\end{eqnarray*}
\end{corollary}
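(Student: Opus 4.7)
The plan is to assemble the statement from three pieces that are already in place: Levasseur's description of $\overline{\mathcal{A}}$ quoted in (\ref{Result}), the explicit identities of Proposition \ref{pu}, and the definition of the ideal $\overline{\mathcal{J}}$ as the preimage of $\mathcal{K}$.

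First I would settle the generation. Since $(G,V)$ is an irreducible multiplicity-free representation with a one-dimensional quotient, formula (\ref{Result}) gives
\[
\overline{\mathcal{A}} \;=\; \mathbb{C}\bigl\langle f,\Delta,\theta,\Omega_{1},\ldots ,\Omega_{r-1}\bigr\rangle .
\]
Now each $\Omega_{j}=E_{j}-b_{E_{j}}(\theta)$ lies in $\mathcal{J}$ by its very construction, and $\mathcal{J}\subset \overline{\mathcal{J}}$ because $\overline{\mathcal{J}}$ is the preimage of the ideal $\mathcal{K}\supset\{0\}$ under $\overline{\mathcal{A}}\to \overline{\mathcal{A}}/\mathcal{J}$. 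Passing to $\mathcal{A}=\overline{\mathcal{A}}/\overline{\mathcal{J}}$ therefore kills all the $\Omega_{j}$, so the images of $f,\theta ,\Delta$ already generate $\mathcal{A}$ as a $\mathbb{C}$-algebra.

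Second, I would check the four relations. By Proposition \ref{pu}, the identities (\ref{a0})--(\ref{d0}) hold verbatim in $\overline{\mathcal{A}}/\mathcal{J}$. Since $\mathcal{A}$ is obtained from $\overline{\mathcal{A}}/\mathcal{J}$ by factoring out the ideal $\mathcal{K}$ (and by construction $\mathcal{K}$ is the ideal generated by precisely these four relations), equalities that already hold in $\overline{\mathcal{A}}/\mathcal{J}$ continue to hold in the further quotient $\mathcal{A}$. This gives (\ref{a0})--(\ref{d0}) in $\mathcal{A}$ and completes the verification.

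There is no serious obstacle here: the content of the corollary is bookkeeping, repackaging the Howe--Umeda--Levasseur structure theorem for $\overline{\mathcal{A}}$ together with the Bernstein--Sato identities into a presentation involving only the three natural operators $f,\theta,\Delta$. The only point that has to be handled carefully is the tower of quotients $\overline{\mathcal{A}}\twoheadrightarrow \overline{\mathcal{A}}/\mathcal{J}\twoheadrightarrow \mathcal{A}=\overline{\mathcal{A}}/\overline{\mathcal{J}}$ and the fact that the Capelli-type hypothesis forces $\Omega_{r}=0$, which is exactly what allows the ``extra'' generators $\Omega_{1},\ldots ,\Omega_{r-1}$ to be absorbed into $\overline{\mathcal{J}}$.
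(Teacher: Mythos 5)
Your proof is correct and follows essentially the same route the paper takes implicitly: the paper states Corollary \ref{cu} as a direct consequence of the construction of $\overline{\mathcal{J}}$ and of formula (\ref{Result}) together with Proposition \ref{pu} (citing Levasseur and Rubenthaler rather than writing out the bookkeeping), and your argument is exactly that bookkeeping, using $\Omega_{j}\in\mathcal{J}\subset\overline{\mathcal{J}}$ to eliminate the extra generators and the persistence of the identities of Proposition \ref{pu} under the further quotient. Nothing is missing.
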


\section{ 
$\mathcal{D}_{V}$%
-modules on representations of "Capelli type" with one-dimensional quotient
generated by their invariant global sections\label{3}}

In this section, we continue with the representation 
$(G, V)$
of the connected (reductive) Lie group
$G$
as in Section \ref{2},
and
$G'$
its derived subgroup.
It is well known, in this case, that
$G$
(resp.
$G'$%
) acts on 
$V$
with finitely many orbits
$(V_{k})_{k\in K}$
(see \cite{Ka}).
Let
$\Lambda \subset T^{\ast}V$
be the Lagrangian subvariety which is the union of the closure of conormal bundles
${T_{V_{k}}^{\ast }V}$%
, where
$V_{k}$
are the orbits of
$G$
(see Panyushev \cite{Pa}).
We recall that the action of
$G$
on
$V$
defines a morphism (see 
(\ref{diffmap1}),
(\ref{diffmap2}))
$\tau: \mathfrak{g} \longrightarrow \Theta_{V}, \; \xi \mapsto \tau(\xi)$
from the Lie algebra
$\mathfrak{g}$
of
$G$
to the subalgebra
$\Theta_{V}$
of
$\mathcal{D}_{V}$
consisting of vector fields on 
$V$%
, i.e. the tangent sheaf on
$V$%
.
So the Lagrangian variety
$\Lambda$ 
is defined by the common zeros of the principal symbols of vector
fields corresponding to infinitesimal generators of
$G$%
.\newline
Recall that a
$\mathcal{D}_{V}$%
-module is said to be holonomic if it is coherent and its characteristic variety is Lagrangian.
Equivalently the characteristic variety is of dimension equal to 
$\dim{V}$%
.
A holonomic 
$\mathcal{D}_{V}$%
-module
$\mathcal{M}$
is regular if there exists a global good filtration
$F\mathcal{M}$
on
$\mathcal{M}$
such that the annihilator of 
$\mathrm{gr}^{F}\mathcal{M}$
(i.e., the ideal
$\mathrm{ann}_{\mathbb{C}[T^{*}V]}\mathrm{gr}^{F}\mathcal{M}$%
) 
is a radical ideal in
$\mathrm{gr}^{F}\mathcal{M}$
(see \cite[definition 5.2]{K1} or 
\cite[Corollary 5.1.11]{KK1}).
As in the introduction, we denote by 
$\mathrm{Mod}_{\Lambda }^{\mathrm{rh}}(\mathcal{D}_{V})$
the full category consisting of all holomorphic regular holonomic 
$\mathcal{D}_{V}$%
-modules whose characteristic variety
$\Lambda$
is contained in the union of the closure of conormal bundles to the 
$G$%
-orbits 
(see Panyushev \cite{Pa}).
Let
$\mathcal{M}$
be a holomorphic regular holonomic
$\mathcal{D}_{V}$%
-module in
$\mathrm{Mod}_{\Lambda }^{\mathrm{rh}}(\mathcal{D}_{V})$%
. We know from Brylinski and Kashiwara
\cite[p. 389, (1.2.4)]{BK} that
$\mathcal{M}$
has a good filtration 
$(\mathcal{M}_{j})_{j\in \mathbb{Z}}$
satisfying the following condition:\newline
For a differential operator
$P$
of degree
$m$
(%
$P\in \Gamma (U, \mathcal{D}_{V}(m))$%
, where 
$U$
is an open subset of
$V$%
),
if its principal symbol
$\sigma_{m}(P)$
vanishes on the characteristic variety
${\rm char}(\mathcal{M})$%
, then we have
\begin{equation}
P\mathcal{M}_{j} \subset \mathcal{M}_{j+m-1} \quad \text{for any}\; j\in \mathbb{Z} \text{.}\label{action}
\end{equation}
In particular, if
$\xi$
is a vector field
(corresponding to an infinitesimal generator of
$G$%
) which describes the characteristic variety 
$\Lambda$%
, its principal symbol vanishes on
$\Lambda \supset {\rm char}(\mathcal{M})$
(so vanishes on
${\rm char}(\mathcal{M})$%
). Then the relation
(\ref{action}) implies that
\begin{equation}
\xi\mathcal{M}_{j} \subset \mathcal{M}_{j+1-1}\text{,}  \quad
\text{that is}
\end{equation}
\begin{equation}
\xi\mathcal{M}_{j} \subset \mathcal{M}_{j} \quad \text{for any}\; j\in \mathbb{Z} \text{.}
\end{equation}
Then we have the following
\begin{remark}\label{special}
The objects of the category 
$\mathrm{Mod}_{\Lambda }^{\mathrm{rh}}(\mathcal{D}_{V})$
are holomorphic regular holonomic
$\mathcal{D}_{V}$%
-modules equipped with global good filtrations
which are preserved by the action of the Lie algebra
$\mathfrak{g}$
of
$G$%
.
\end{remark}

We recall the folloing definition:
\begin{definition}\label{du}
Let
$G$
be an algebraic group acting on a smooth variety
$V$%
, and 
$\alpha :G\times V\longrightarrow V$
the group action morphism 
$\left(\alpha\left( g,v\right) = g\cdot v \; \left( g\in G,\; v\in V\right)\right)$%
.
One says that the group
$G$ 
acts on a well filtered
$\mathcal{D}_{V}$%
-module 
$\mathcal{M}$ 
if it preserves the good filtration on
$\mathcal{M}$%
, and there exists an isomorphism of 
$\mathcal{D}_{G\times V}$%
-modules 
$u:\alpha^{+}(\mathcal{M})\overset{\sim }{\longrightarrow }{\rm pr}_{V}^{+}(\mathcal{M})$
satisfying the associativity condition coming from the group multiplication of
$G$
$\left(
{\rm pr}_{V}:G\times V\longrightarrow V,\left( g,v\right) \longmapsto v\;
\text{is the projection onto}\;
V 
\right)$% 
.
\end{definition}

We specialize further to the case where
$(G, V)$
is of Capelli type, i.e., 
$(G, V)$
is an irreducible multiplicity-free-space such that
$\Gamma\left(V, \mathcal{D}_{V}\right)^{G}$
is equal to the image of the center of
$U(\mathfrak{g})$
under the differential 
$\tau : \mathfrak{g} \longrightarrow \Gamma\left(V, \mathcal{D}_{V}\right)^{\rm pol}$
of the 
$G$%
-action (see definition \ref{d3}). More precisely,  assume that
$(G,V)$
is a representation of Capelli type with a one-dimensional quotient, i.e., there exists a non constant polynomial
$f$
such that
$f \not\in \mathbb{C}[V]^{G}$%
, and such that
$\mathbb{C}[V]^{G'}= \mathbb{C}[f]$
(see definition \ref{d2}).
Let
$G_{1}$
be the simply connected cover of the derived subgroup
$G'$%
.
T. Levasseur
\cite[Lemma 5.15]{L}
proved that the category of
$(G_{1}\times{C})$%
-equivariant
$\mathcal{D}_{V}$%
-modules, where
$C$
is the centre of
$G$%
, is equivalent to the category 
$\mathrm{Mod}_{\Lambda }^{\mathrm{rh}}(\mathcal{D}_{V})$
of holomorphic regular holonomic
$\mathcal{D}_{V}$%
-modules studied here.
Therefore, we deduced the following remark:
\begin{remark}
\label{pz}The action of
$G$ 
on 
$V$
extends to an action of the universal covering 
$G_{1}$ 
on 
$ \mathcal{D}_{V}$%
-modules
$\mathcal{M}$ 
in 
$\mathrm{Mod}_{\Lambda }^{\mathrm{rh}}\left( \mathcal{D}_{V}\right)$%
.
Specially the derived subgroup
$G'$
acts on
$\mathcal{M}$%
.
\end{remark}

This section consists in the proof of the main general argument of the paper. We
show that any 
$\mathcal{D}_{V}$%
-module 
$\mathcal{M}$
in the category 
$\mathrm{Mod}_{\Lambda }^{\mathrm{rh}}(\mathcal{D}_{V})$ 
is generated by its invariant global sections under the action of 
$G'$%
.
\begin{theorem}
\label{td}A $\mathcal{D}_{V}$%
-module 
$\mathcal{M}$ 
in 
$\mathrm{Mod}_{\Lambda}^{\mathrm{rh}}(\mathcal{D}_{V})$ 
is generated by its 
$G'$%
-invariant
global sections.
\end{theorem}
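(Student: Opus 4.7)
The plan is to argue by contradiction. Let $\mathcal{N} := \mathcal{D}_V \cdot \Gamma(V,\mathcal{M})^{G'}$ be the $\mathcal{D}_V$-submodule generated by the $G'$-invariant global sections of $\mathcal{M}$, and put $\mathcal{Q} := \mathcal{M}/\mathcal{N}$. As a $\mathcal{D}_V$-quotient of a regular holonomic module with characteristic variety in $\Lambda$, the module $\mathcal{Q}$ still belongs to $\mathrm{Mod}_\Lambda^{\mathrm{rh}}(\mathcal{D}_V)$, and by Remark \ref{special} it inherits a good filtration preserved by the action of $\mathfrak{g}$, in particular by $\theta$. The theorem amounts to the assertion $\mathcal{Q} = 0$.

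I would first reduce to a global vanishing statement for $G'$-invariants. Since $V$ is Stein, coherent analytic $\mathcal{D}_V$-modules are acyclic for $\Gamma(V, -)$, so the short exact sequence $0 \to \mathcal{N} \to \mathcal{M} \to \mathcal{Q} \to 0$ remains exact after taking global sections. Exactness of the $G'$-invariant functor (reductivity of $G'$) gives a short exact sequence
\[
0 \longrightarrow \Gamma(V,\mathcal{N})^{G'} \longrightarrow \Gamma(V,\mathcal{M})^{G'} \longrightarrow \Gamma(V,\mathcal{Q})^{G'} \longrightarrow 0.
\]
By the very definition of $\mathcal{N}$, one has $\Gamma(V,\mathcal{M})^{G'} \subseteq \Gamma(V,\mathcal{N})$, and combined with the obvious reverse containment on $G'$-invariants this forces $\Gamma(V,\mathcal{N})^{G'} = \Gamma(V,\mathcal{M})^{G'}$; hence $\Gamma(V,\mathcal{Q})^{G'} = 0$. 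The theorem is thereby reduced to the claim: \emph{every nonzero object of $\mathrm{Mod}_\Lambda^{\mathrm{rh}}(\mathcal{D}_V)$ admits a nonzero $G'$-invariant global section.}

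To locate such a section, assume $\mathcal{Q} \neq 0$ and apply Theorem \ref{tu}(i): $\mathcal{Q}$ is generated by finitely many $\theta$-homogeneous global sections. Fix a nonzero $\bar u$ of generalized $\theta$-eigenvalue $\lambda$ lying in some $\Gamma(V,\mathcal{Q}_k)$, and by Theorem \ref{tu}(ii) consider the finite-dimensional space
\[
W := \Gamma(V,\mathcal{Q}_k) \cap \bigcup_{p \geq 1}\ker(\theta - \lambda)^p.
\]
Because the good filtration and $\theta$ are both preserved by $\mathfrak{g}'$ (Remark \ref{special}) and $\mathfrak{g}'$ commutes with $\theta$, $W$ is a finite-dimensional $\mathfrak{g}'$-module; by Remark \ref{pz} it integrates to a representation of the simply connected cover $G_1$ of $G'$, whose invariants coincide with those of $G'$. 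Reductivity gives a decomposition $W = W^{G'} \oplus W_1$, and producing a nonzero element of $W^{G'}$ yields a $G'$-invariant section mapping nontrivially to $\mathcal{Q}$, contradicting $\Gamma(V,\mathcal{Q})^{G'} = 0$.

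The main obstacle is exactly the non-vanishing $W^{G'} \neq 0$. To establish this I would combine the multiplicity-free decomposition (\ref{one}) with the explicit Capelli structure of $\mathcal{A}$ from Corollary \ref{cu} and the Bernstein--Sato relations (\ref{c0})--(\ref{d0}), which link $\theta$-eigenvalues to $G'$-invariant directions through the operators $f$ and $\Delta$. Concretely, the strategy is an induction on the support of $\mathcal{Q}$, a finite union of closures of $G$-orbits (Kac). On the open orbit $\Omega$, Levasseur's equivalence (his Lemma 5.15) identifies $\mathrm{Mod}_\Lambda^{\mathrm{rh}}(\mathcal{D}_V)$ with $(G_1 \times C)$-equivariant $\mathcal{D}_V$-modules, and multiplicity-freeness forces the trivial $G'$-type to appear in each $\theta$-eigenspace of global sections (this is the content that generalizes the case-by-case verifications in \cite{N1}--\cite{N6}); the $\mathcal{A}$-action connects eigenspaces in a controlled way. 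The inductive step handles modules supported on proper orbit closures through the functoriality of $G'$-invariants under the stratification. Completing this induction yields $W^{G'} \neq 0$, furnishing the required $G'$-invariant section of $\mathcal{Q}$ and the desired contradiction.
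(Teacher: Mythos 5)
Your reduction of the theorem to the claim \emph{every nonzero object of $\mathrm{Mod}_\Lambda^{\mathrm{rh}}(\mathcal{D}_V)$ admits a nonzero $G'$-invariant global section} is sound (modulo standard points: one should pass to the algebraic category so that $\Gamma(V,-)$ is exact on affine $V$, and one needs the $G'$-action on global sections to be rational so that taking $G'$-invariants is exact). But that claim \emph{is} the theorem, and your treatment of it is where the proposal breaks down: you isolate the finite-dimensional $G'$-module $W$ of homogeneous sections of a fixed degree, correctly identify $W^{G'}\neq 0$ as ``the main obstacle,'' and then offer only a strategy --- ``combine the multiplicity-free decomposition with the Capelli structure,'' ``induction on the support,'' ``functoriality of $G'$-invariants under the stratification'' --- without carrying out any step. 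Nothing in the sketch explains why the trivial $G'$-type must occur in $W$; multiplicity-freeness of $\mathbb{C}[V]$ does not by itself force a quotient module $\mathcal{Q}$ to contain invariant sections, and the inductive step over orbit closures is not even formulated.

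The paper's proof supplies exactly the missing mechanism, and it is geometric rather than representation-theoretic. On the open orbit $\Omega=V\setminus f^{-1}(0)$ the module descends along $f:\Omega\to\Omega/G\cong\mathbb{G}_m$ --- this uses the connectedness of the generic isotropy subgroups of $G'$ (Appendix C) --- so $\Gamma(\Omega,\mathcal{M}_\Omega)$ is generated over $\mathcal{O}_\Omega$ by pullback sections, which are automatically $G'$-invariant. These are then extended to global sections of $\mathcal{M}$ after multiplication by $f^N$ via Serre's extension lemma (Lemma \ref{Serre}), and an averaging argument (Lemma \ref{Nitin}) shows the extension can be taken $G'$-invariant because $f$ itself is $G'$-invariant. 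This proves $\mathcal{M}=\mathcal{M}^{G'}$ on $\Omega$, so the quotient is supported on the hypersurface $f=0$, and the finiteness of the orbit stratification reduces one to modules supported on ever smaller orbit closures, ultimately the origin. If you want to salvage your outline, you must replace the paragraph beginning ``To establish this I would combine\ldots'' with an actual argument of this kind; as written, the proposal establishes only the (essentially formal) equivalence of the theorem with the existence of invariant sections, not the theorem itself.
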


Firstly, we give some basic results which will be used in the proof of this central theorem.

\subsection{Extension of sections and 
$G$%
-invariance}

\noindent For the proof of Theorem \ref{td}, we shall use an algebraic point of view. 
Since the concerning
$\mathcal{D}_{V}$%
-modules are regular holonomic, it is equivalent to consider the algebraic case or the analytic one. We need the following two lemmas in the proof:\newline
\begin{lemma}\label{Serre}
{\rm(\cite[Lemma 1, p. 247, $n^{\circ} 55$]{S})}\newline
Let 
$V$
be an affine variety,
$f$
a regular function on
$V$%
, and
$\Omega$
the set of points
$x\in V$
such that 
$f(x)\neq 0$%
. Let 
$\mathcal{F}$
be a coherent algebraic sheaf on
$V$%
, and
$s \in \Gamma\left(\Omega, \mathcal{F}\right)$
a section of 
$\mathcal{F}$
on 
$\Omega$%
.
Then, for any large enough
$N \in \mathbb{N}$%
, there exists a section 
$s'$
of
$\mathcal{F}$
on the whole
$V$
{\rm{(}}% 
$s' \in \Gamma\left(V, \mathcal{F}\right)$%
{\rm{)}}%
, such that 
$s' = sf^{N}$
on
$\Omega$% 
, i.e.,
\begin{equation}
s'_{|_{\Omega}} = sf^{N} \text{.}
\end{equation}
\end{lemma}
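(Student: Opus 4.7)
The plan is to reduce to a finite affine covering and exploit the fact that, for a coherent sheaf on an affine variety, the sections over a principal open subset correspond to a localization of the sections over the ambient affine at the defining regular function. First I would choose a finite cover $V = \bigcup_{i=1}^{r} V_i$ by principal open affines $V_i = V_{g_i}$ on which $\mathcal{F}(V_i)$ is a finitely generated $\mathcal{O}(V_i)$-module (possible because $V$ is affine and $\mathcal{F}$ is coherent, so the coordinate ring is Noetherian and the corresponding module is finitely generated). On each $V_i$, the intersection $V_i \cap \Omega$ is the principal open $V_{fg_i}$, and the restriction $s|_{V_i \cap \Omega}$ is an element of the localization $\mathcal{F}(V_i)[1/f]$.

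The key local step is then immediate from finite generation: any element of $\mathcal{F}(V_i)[1/f]$ has the form $t_i/f^{N_i}$ for some $t_i \in \mathcal{F}(V_i)$ and some $N_i \in \mathbb{N}$, so after setting $N_0 = \max_i N_i$ and multiplying each $t_i$ by an appropriate extra power of $f$, I get local extensions $s_i \in \mathcal{F}(V_i)$ satisfying $s_i|_{V_i \cap \Omega} = f^{N_0}\, s|_{V_i \cap \Omega}$. Next comes the gluing. On each double intersection $V_{ij} := V_i \cap V_j$, the two extensions $s_i$ and $s_j$ agree on $V_{ij} \cap \Omega$ but may disagree on $V_{ij}$ itself. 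Applying the same localization principle to the coherent sheaf $\mathcal{F}|_{V_{ij}}$, the vanishing of $s_i - s_j$ in $\mathcal{F}(V_{ij})[1/f]$ forces $f^{M_{ij}}(s_i - s_j) = 0$ in $\mathcal{F}(V_{ij})$ for some $M_{ij}\in\mathbb{N}$. Setting $M = \max_{i,j} M_{ij}$ (finite because the cover is finite), the sections $f^M s_i$ agree on all pairwise overlaps and therefore glue to a single global section $s' \in \Gamma(V, \mathcal{F})$ with $s'|_\Omega = f^{N_0 + M} s$. Since multiplication by further powers of $f$ preserves the property, the conclusion holds for every $N \geq N_0 + M$.

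The main obstacle I expect is the bookkeeping needed to make the exponent $N$ uniform across the cover: both the local extension exponents $N_i$ and the gluing exponents $M_{ij}$ must be bounded simultaneously, and this is only possible because the cover is finite (which in turn rests on the Noetherianity of the coordinate ring of $V$). The finite generation of $\mathcal{F}(V_i)$, together with the algebraic identification $\Gamma(V_i \cap \Omega, \mathcal{F}) = \Gamma(V_i, \mathcal{F})[1/f]$ on principal opens, must be invoked explicitly, since the entire argument collapses without coherence and without the finiteness of the covering.
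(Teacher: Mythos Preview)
Your argument is correct and is essentially the classical proof of this lemma (the one due to Serre in FAC): reduce to a finite principal-open cover, use that sections over $V_i\cap\Omega$ lie in $\mathcal{F}(V_i)[1/f]$ to clear denominators locally, and then kill the discrepancies on overlaps by a further power of $f$, taking maxima throughout thanks to finiteness of the cover. Note, however, that the paper does not supply its own proof of this statement at all: it simply quotes the lemma from an external reference and uses it as a black box in the proof of the subsequent Lemma~\ref{Nitin}. So there is nothing to compare against beyond observing that your proof is the standard one underlying the cited result.
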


\begin{lemma}\label{Nitin}
Consider
$G'$
the complex algebraic group acting on the affine algebraic variety
$V$%
,
$f$
a
$G'$%
-invariant regular function on
$V$
$\left(f\in \mathbb{C}[V]^{G'}\right)$%
,
$\Omega$
the complement in
$V$ 
of the hypersurface defined by 
$f=0$%
, and 
$\mathcal{F}$
a 
$G'$%
-equivariant coherent algebraic sheaf on
$V$%
.
Then, any 
$G'$%
-invariant section 
$s$
of
$\mathcal{F}$
on
$\Omega$
$\left( s \;\in \;\Gamma\left(\Omega, \mathcal{F}\right)^{G'}\right)$
extends to a 
$G'$%
-invariant global section
$m$
$\left( m \;\in \;\Gamma\left(V, \mathcal{F}\right)^{G'}\right)$%
.
\end{lemma}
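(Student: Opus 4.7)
The plan is to combine Serre's extension lemma (Lemma \ref{Serre}) with a Reynolds-type averaging over $G'$. The key point is that $G'=[G,G]$ is semisimple (being the derived subgroup of the connected reductive group $G$), so every rational $G'$-module is completely reducible and admits a canonical projection onto its $G'$-invariant part. Without this reductivity it would be hopeless to preserve equivariance under the non-canonical sheaf-theoretic extension that Serre's lemma produces.

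First, I would apply Lemma \ref{Serre} to the given section $s\in \Gamma(\Omega,\mathcal{F})^{G'}$: there is an integer $N\geq 0$ and a global section $s'\in \Gamma(V,\mathcal{F})$ such that $s'|_{\Omega}=sf^{N}$. Since $f\in\mathbb{C}[V]^{G'}$ and $s$ is $G'$-invariant on $\Omega$, the restricted section $sf^{N}=s'|_{\Omega}$ is $G'$-invariant; however, the global extension $s'$ itself is a priori not $G'$-invariant, because Serre's construction is not canonical.

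Second, I would average $s'$ over $G'$. Because $\mathcal{F}$ is $G'$-equivariant coherent on the affine variety $V$, the space $\Gamma(V,\mathcal{F})$ is a rational (algebraic) $G'$-module; by semisimplicity of $G'$ it decomposes as the direct sum of its $G'$-invariant subspace and a $G'$-stable complement, yielding a Reynolds projection $R:\Gamma(V,\mathcal{F})\to\Gamma(V,\mathcal{F})^{G'}$, and similarly on $\Gamma(\Omega,\mathcal{F})$. Set $m:=R(s')\in\Gamma(V,\mathcal{F})^{G'}$. Since the restriction map $\Gamma(V,\mathcal{F})\to\Gamma(\Omega,\mathcal{F})$ is $G'$-equivariant, it commutes with $R$, and the already-invariant section $sf^{N}=s'|_{\Omega}$ is fixed by the Reynolds projection on $\Omega$. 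Therefore
$$
m|_{\Omega}\;=\;R(s')|_{\Omega}\;=\;R\bigl(s'|_{\Omega}\bigr)\;=\;R(sf^{N})\;=\;sf^{N},
$$
so $m$ is a $G'$-invariant global extension of $s$ (up to the invariant factor $f^{N}$, which is precisely what is needed in the subsequent application to Theorem \ref{td}, where one works modulo powers of $f$).

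The main obstacle, as indicated, is reconciling the non-canonical character of Serre's extension with the required $G'$-equivariance. Reductivity of $G'$ is exactly what makes the averaging step work: for a non-reductive acting group the functor of $G'$-invariants would fail to be exact, and $G'$-invariant local sections could in principle obstruct to extending invariantly. All other ingredients, namely Serre's lemma, the rationality of $\Gamma(V,\mathcal{F})$ as a $G'$-module, and the equivariance of restriction, are standard.
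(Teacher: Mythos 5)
Your proof is correct, but the second half takes a genuinely different route from the paper. Both arguments begin identically, by invoking Lemma \ref{Serre} to produce a (non-equivariant) global section $s'$ with $s'|_{\Omega}=sf^{N}$. To repair equivariance, the paper argues element-by-element: writing $s=m/f^{r}$ in $\mathcal{F}\otimes_{A}A[1/f]$, it deduces from $g\cdot s=s$ and $g\cdot f=f$ that $(m-g\cdot m)f^{N}=0$, whence $mf^{N}=g\cdot(mf^{N})$, so that multiplying the extension by a further power of the invariant function $f$ makes it invariant. You instead apply the Reynolds projection $R$ onto $G'$-invariants, using that $G'$ is semisimple and that global sections of an equivariant coherent sheaf on an affine variety form a rational $G'$-module, and conclude via the functoriality $R\circ\mathrm{res}=\mathrm{res}\circ R$ together with $R(sf^{N})=sf^{N}$. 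Your argument is cleaner in one respect: the paper's computation produces, for each $g$, an exponent $N_{g}$ with $(m-g\cdot m)f^{N_{g}}=0$, and one must still appeal to Noetherianity (the $f$-power torsion of $\Gamma(V,\mathcal{F})$ is finitely generated) to get a single $N$ working for all $g$ simultaneously; averaging once avoids this. On the other hand, your framing overstates the role of reductivity: the paper's explicit argument uses only that $f$ is $G'$-invariant and that $\Gamma(V,\mathcal{F})$ is a finitely generated module, so it would go through for a non-reductive acting group as well, whereas your Reynolds operator genuinely requires complete reducibility (which does hold here, since $G'=[G,G]$ is semisimple). Finally, note that like the paper you actually extend $sf^{N}$ rather than $s$ itself; this matches how the lemma is used in the proof of Theorem \ref{td}, so it is not a defect, but it is worth being aware that the literal statement is only achieved up to an invariant power of $f$.
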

\begin{proof}
 Recall that
$V$
is an affine algebraic variety, i.e. 
$V = \text{Spec} A$%
, where
$A := \mathbb{C}[V]$
is an affine algebra over 
$\mathbb{C}$
and
$\Omega = \text{Spec} A[\frac{1}{f}]$
with
$A[\frac{1}{f}] = \mathbb{C}[V][\frac{1}{f}]= \mathbb{C}[\Omega]$%
.\newline
Since 
$\mathcal{F}$
is a coherent algebraic sheaf on
$V$%
, then
$\mathcal{F}$
is a finitely generated 
$A$%
-module. We consider the restriction of
$\mathcal{F}$
on
$\Omega$%
:
\begin{equation}
\mathcal{F}[\Omega] : =\mathcal{F}\bigotimes\limits_{A} A[\frac{1}{f}]\text{.}\label{*}
\end{equation}
The previous lemma says that any section
$s$
of
$\mathcal{F}$
on
$\Omega$
$\left( s \;\in \;\Gamma\left(\Omega, \mathcal{F}\right)\right)$
extends to a global section
$m$
$\left( m \;\in \;\Gamma\left(V, \mathcal{F}\right)\right)$
such that
\begin{equation}
 m_{|_{\Omega}} = sf^{p} \quad \text{for} \;\;  p \gg 0 \text{.} \label{**}
\end{equation}
So, from (\ref{*})
and
(\ref{**}),
the section 
$s$
can be written as
\begin{equation}
 s = \dfrac{m}{f^{r}} \quad \text{for} \;\;r \gg 0\text{.}\label{***}
\end{equation}
Recall that the group
$G'$
acts on
$A$
and on
$\mathcal{F}$%
. Then, for any 
$g \in G'$
acting on
$s$%
, we have
\begin{equation}
 g.s = g.\left(\dfrac{m}{f^{r}}\right) = \dfrac{g.m}{g.f^{r}} \text{.} \label{****}
\end{equation}
Since
$s$
 is  a 
$G'$%
-invariant section
(%
$g.s = s$%
)
and
$f$
is a 
$G'$%
-invariant regular function
(%
$f = g.f$%
), then the previous equality becomes:
\begin{equation}
 s = \dfrac{g.m}{f^{r}}\text{.}
\end{equation}
Using
(\ref{***})
we get
\begin{equation}
 \dfrac{m}{f^{r}} = \dfrac{g.m}{f^{r}}  \quad \Longleftrightarrow \quad \dfrac{m-g.m}{f^{r}} = 0\text{.}
\end{equation}
This means that there exists a large integer 
$N \gg 0$
such that 
\begin{equation}
 (m - g.m) f^{N} = 0 \quad \Longleftrightarrow \quad mf^{N} = (g.m)f^{N}\text{.}
\end{equation}
Since 
$f$
is 
$G'$%
-invariant (% 
$f^{N} = g.f^{N}$%
), 
this last becomes
\begin{equation}
mf^{N} = (g.m)(g.f^{N})\text{,}
\end{equation}
that is,
\begin{equation}
 mf^{N} = g. (mf^{N})\text{.}
\end{equation}
Thus 
$mf^{N}$
is a 
$G'$%
-invariant global section extending
$s$ 
$\left( mf^{N} \;\in \;\Gamma\left(V, \mathcal{F}\right)^{G}\right)$
\end{proof}

\subsection{Proof of theorem \ref{td}}
Recall that the irreducible multiplicity free representation
$(G,V)$
has a Zariski open dense orbit
$\Omega$%
,
and a relative invariant
$f$
(i.e., there exists a character
$\chi \in \mathcal{X}{(G)}$
such that
$g\cdot{f} = \chi{(g)} f$
for
$g\in G$%
)
which is a
$G'$%
-invariant homogeneous polynomial of degree
$n$
such that
$\mathbb{C}[V]^{G'} = \mathbb{C}[f]$%
. In this case, we know from V. G. Kac \cite{Ka}
that 
$G$
has finitely many orbits, namely
$n+1$
orbits. We denote by
$\overline{V_{k}}$
the closure of the
$G$%
-orbits
$V_{k}$
for
$0 \leq k \leq n$
with
$V_{0} = \{0\}$%
. Let us consider again
$f$
as the mapping
$f: V \longrightarrow \mathbb{C}, \; x \mapsto f(x)$%
, and 
$\overline{V}_{n-1}$
the hypersurface defined by 
$f = 0$%
, then we have
$\Omega  := V\backslash \overline{V}_{n-1}$
the complement in
$V$
of the 
$\overline{V}_{n-1}$%
.\newline
 
Let 
$\mathcal{M}$
be a holomorphic regular holonomic
$\mathcal{D}_{V}$%
-module in the category
$\mathrm{Mod}_{\Lambda }^{\mathrm{rh}}\left( \mathcal{D}_{V}\right)$%
. One sets
$$\mathcal{M}^{G'}:=\mathcal{D}_{V}\lbrace m_{1}, \cdots , m_{p} \; \in \;\Gamma(V,\mathcal{M})^{G'}\;\; \text{such that}\;\;\mathrm{dim}_{\mathbb{C}}\mathbb{C}[\theta]m_{j} <\infty \;\;\text{for}\; 1\leq j\leq p\rbrace$$
the submodule of 
$\mathcal{M}$
generated, over
$\mathcal{D}_{V}$% 
, by finitely many homogeneous global sections, which are invariant under the action of
$G'$%
.

First, we claim that on the open dense orbit
$\Omega$%
,
we have the equality
$\mathcal{M} = \mathcal{M}^{G'} $%
. \newline 
\smallskip
Indeed, let 
$j: \Omega \longrightarrow V$
be the open embedding. The restriction
$\mathcal{M}_{\Omega} := j^{+}\left(\mathcal{M}\right)$
is a 
$G'$%
-equivariant
$\mathcal{D}_{\Omega}$%
-module. Notice that, if we denote again by 
$f$
the mapping
$f : V \longrightarrow \mathbb{A}^{1}$
, this identifies 
$\Omega / G$
with
$\mathbb{G}_{m} = \mathbb{A}^{1}\backslash\lbrace 0\rbrace$%
. The generic stabilizers 
$H$ 
in 
$G'$
of points in 
$\Omega$
are connected (see Appendix C, Remark), so the 
$G'$%
-equivariant
$\mathcal{D}_{\Omega}$%
-module
$\mathcal{M}_{\Omega}$
is the pullback by 
$f$
of a 
$\mathcal{D}_{\Omega / G}$%
-module 
$\mathcal{N}$
on
$\Omega / G$%
:
\begin{equation}
\mathcal{M}_{\Omega} = f^{+}\left(\mathcal{N}\right) \quad \text{with}\quad  \mathcal{N}\; \text{a}\;
\mathcal{D}_{\Omega / G}%
\text{-module}\text{.}
\end{equation}
Thus on 
$\Omega$%
, the
$G'$%
-invariant sections of
$\mathcal{M}_{\Omega}$%
, i.e., 
$\Gamma\left( \Omega , \mathcal{M}_{\Omega}\right)^{G'}$
(which are exactly the inverse images by 
$f$
of
$\Gamma\left(\mathbb{G}_{m}, \mathcal{N}\right)$ 
the sections on
$\mathbb{G}_{m}$
of
$\mathcal{N}$%
) generate
$\Gamma\left( \Omega , \mathcal{M}_{\Omega}\right)$
as a
$\Gamma\left(\Omega, \mathcal{O}_{\Omega}\right)$%
-module:
\begin{equation}
\Gamma\left( \Omega , \mathcal{M}_{\Omega}\right)^{G'} = f^{-1}\left(\Gamma\left(\mathbb{G}_{m}, \mathcal{N}\right)\right)\text{,}
\end{equation}
and
\begin{equation}\label{star}
\Gamma\left( \Omega , \mathcal{M}_{\Omega}\right) = \Gamma\left(\Omega, \mathcal{O}_{\Omega}\right) 
\left\lbrace \Gamma\left( \Omega , \mathcal{M}_{\Omega}\right)^{G'} \right\rbrace
= \Gamma\left(\Omega, \mathcal{O}_{\Omega}\right)
\left\lbrace f^{-1}\left( \Gamma\left(\mathbb{G}_{m}, \mathcal{N}\right)\right) \right\rbrace\text{.}
\end{equation}
Now, for every section
$m \in \Gamma\left( \Omega , \mathcal{M}_{\Omega}\right)$%
,
one can find a sufficiently large integer
$N \gg 0$
such that the section obtained by multiplication by
$f^{N}$% 
, that is,
\begin{equation}
mf^{N}\; \in \; \Gamma\left( \Omega , \mathcal{M}_{\Omega}\right)
\end{equation}
extends to a global section of
$\mathcal{M}$
(see Lemma \ref{Serre}), i.e., the section
$mf^{N}$
lifts to a global section
\begin{equation} 
\widetilde{mf^{N}}
\; \in \;
\Gamma\left( V, \mathcal{M}\right)
\text{.}
\end{equation}
If 
$m$
is a 
$G'$%
-invariant section on
$\Omega$
(%
$m \in \Gamma\left( \Omega , \mathcal{M}_{\Omega}\right)^{G'}$%
), so is 
$mf^{N}$%
, i.e., 
\begin{equation}
mf^{N}\in \Gamma\left( \Omega , \mathcal{M}_{\Omega}\right)^{G'}\text{.}
\end{equation}
Then, according to the Lemma \ref{Nitin}, we can choose this lifting section
$\widetilde{mf^{N}}$
to be 
$G'$%
-invariant:
\begin{equation} 
\widetilde{mf^{N}}
\; \in \;
\Gamma\left( V, \mathcal{M}\right)^{G'}
\text{.}
\end{equation}
Thus, by
(\ref{star})
(and since the mapping
$f$
is invertible on
$\Omega$%
), the image of
$\Gamma\left( V , \mathcal{M}\right)^{G'}$
in
$\Gamma\left( \Omega , \mathcal{M}_{\Omega}\right)^{G'}$
generates 
$\Gamma\left( \Omega , \mathcal{M}_{\Omega}\right)^{G'}$
as a
$\Gamma\left(\Omega, \mathcal{O}_{\Omega}\right)$%
-module.

\noindent Since
$\Omega$ 
is an affine space, we see that the restriction of
$\mathcal{M}^{G'}$
to
$\Omega$
equals
$\mathcal{M}_{\Omega}$%
:
\begin{equation}
j^{+}\left(\mathcal{M}^{G'}\right) = \mathcal{M}_{\Omega}\text{.}
\end{equation}
Hence on 
$\Omega$%
, the quotient module
$\mathcal{M}/ \mathcal{M}^{G'}$
is zero, namely
\begin{equation}
\mathcal{M}/ \mathcal{M}^{G'} = 0 \quad \text{on} \quad\Omega \text{,}
\end{equation}
and its support lies in the hypersurface
$\overline{V}_{n-1}$%
:
\begin{equation}
\rm{Supp}\left(\mathcal{M}/ \mathcal{M}^{G'}\right) \subset\; \overline{V}_{n-1}\text{.}
\end{equation}\newline

Now, since we already know that 
$\mathcal{M}$
is a 
$G'$%
-equivariant
$\mathcal{D}_{V}$%
-module (see Remark \ref{pz}), then 
$\mathcal{M}^{G'}$
is also
$G'$%
-equivariant, hence such is the quotient module
$\mathcal{M}/ \mathcal{M}^{G'}$%
. Moreover, since 
$\overline{V}_{n-1}$
has a finite number of 
$G'$%
- orbits, we have
$\mathcal{M}/ \mathcal{M}^{G'}$ 
with support on the closure of the
$G'$%
-orbits, i.e.,
\begin{equation}
\rm{Supp}\left(\mathcal{M}/ \mathcal{M}^{G'}\right) \subset\; \overline{V}_{k}\quad for \;0\leq k \leq n-2\text{.}
\end{equation}
In particular, the quotient module
$\mathcal{M}/ \mathcal{M}^{G'}$ 
is supported by 
$V_{0}$ 
(the
Dirac module with support at the origin), then 
$\mathcal{M} = \mathcal{M}^{G'}$%
.

\section{Equivalence of categories\label{4}}

In this section, we establish the main result of this paper: Theorem \ref{MR}.
\newline
Recall that
$\overline{\mathcal{A}} =  \mathbb{C}\left\langle{f, \Delta, \theta, \Omega_{1}, \cdots ,\Omega_{r-1} }\right\rangle $
is the algebra of
$G'$%
-invariant differential operators. Since the Euler vector field
$\theta$
belongs to
$\overline{\mathcal{A}}$%
, we can decompose the algebra
$\overline{\mathcal{A}}$
under the adjoint action of
$\theta$:
\begin{equation}
\overline{\mathcal{A}} = \bigoplus\limits_{k \in \mathbb{N}}\overline{\mathcal{A}}\left[ k\right],\quad \overline{\mathcal{A}}\left[ k\right]=\{P\in \overline{\mathcal{A}}:\;\; \left[\theta , P\right] = kP\}
\end{equation}
and we can check that
\begin{equation}
\forall\; k,l \in \mathbb{N},\quad \overline{\mathcal{A}}\left[ k\right]\cdot \overline{\mathcal{A}}\left[ l\right]\subset \overline{\mathcal{A}}\left[ k+l\right].
\end{equation}
so
$\overline{\mathcal{A}}$
is a graded algebra.\newline
Recall also that
$\mathcal{J}\subset \overline{\mathcal{A}}$
is the annihilator of
$\mathbb{C}[f]$%
.
We have denoted
$\overline{\mathcal{J}}$
the preimage in
$\overline{\mathcal{A}}$
of the ideal in
$\overline{\mathcal{A}}/\mathcal{J}$
defined by the relations
(\ref{a0}), (\ref{b0}), (\ref{c0}), (\ref{d0})
of Proposition \ref{pu}:
\begin{eqnarray*}
\left[ \theta, f \right] & = & nf \text{,} \\
\left[\theta, \Delta \right] & = & -n \text{,}\Delta\\
f\Delta & = &  c\dfrac{\theta}{n}(\dfrac{\theta}{n}+\lambda_{1})\cdots(\dfrac{\theta}{n}+\lambda_{n-1}) \text{,} \\
\Delta{f} & = & c(\dfrac{\theta}{n}+1)(\dfrac{\theta}{n}+\lambda_{1}+1)\cdots(\dfrac{\theta}{n}+\lambda_{n-1} +1)\text{.} 
\end{eqnarray*}
We put
$\mathcal{A}$
the quotient of
$\overline{\mathcal{A}}$ by $\overline{\mathcal{J}}$:
$\mathcal{A} := \overline{\mathcal{A}}/\overline{\mathcal{J}}$
(see Corollary \ref{cu}).

Now, since
$\overline{\mathcal{J}}$
is an ideal of
$\overline{\mathcal{A}}$
it decomposes also under the adjoint action of
$\theta$:
\begin{equation}
\overline{\mathcal{J}} = \bigoplus\limits_{k \in \mathbb{N}}\overline{\mathcal{J}}\left[ k\right], \quad \overline{\mathcal{J}}\left[ k\right]=\overline{\mathcal{J}}\cap\overline{\mathcal{A}}\left[ k\right].
\end{equation}
Note that
$\overline{\mathcal{J}}$
is an homogeneous ideal of the graded algebra
$\overline{\mathcal{A}}$%
, thus the quotient algebra
$\mathcal{A} = \overline{\mathcal{A}}/\overline{\mathcal{J}}$
is naturally graded by
\begin{equation}
\mathcal{A}\left[k\right] :=\left(\overline{\mathcal{A}}/\overline{\mathcal{J}}\right)\left[k\right] =
\overline{\mathcal{A}}\left[k\right]/\overline{\mathcal{J}}\left[k\right]\text{.}
\end{equation}

\smallskip \smallskip

As in the introduction, we denote by
$\mathrm{Mod}^{\mathrm{gr}}(\mathcal{A})\;$
the category whose objects are finitely generated left
$\mathcal{A}$%
-modules 
$T$
such that for each 
$s \in T$%
, the 
$\mathbb{C}$%
-vector space spanned by the set
$\{\theta^{n}s\; /\; n\geq 1\}$
is finite dimensional. Equivalently
the category consisting of graded
$\mathcal{A}$%
-modules
$T$
of finite type such that
$\dim _{\mathbb{C}}\mathbb{C}\left[ \theta \right] u<\infty\; $
for any
$u$
in
$T$%
. In other words,
$T$
is a direct sum of finite dimensional
$\mathbb{C}$%
-vector
spaces:
\begin{equation}
T\;=\bigoplus\limits_{\alpha
\in \mathbb{C}}T_{\alpha },  \quad T_{\alpha }:=\bigcup\limits_{p\in \mathbb{N}}\ker \left( \theta -\alpha
\right) ^{p}\text{ (with }\dim _{\mathbb{C}}T_{\alpha }<\infty \text{)}
\end{equation}
equipped with the endomorphisms $f$%
, $\theta $%
, $\Delta $ of degree $%
n, $ $0,$ $-n$%
, respectively and satisfying the relations
(\ref{a0}), (\ref{b0}), (\ref{c0}), (\ref{d0})
of Proposition \ref{pu}
with $\left( \theta -\alpha
\right) $ being a nilpotent operator on each $T_{\alpha }$%
.\smallskip
\smallskip

Recall that
$\mathrm{Mod}_{\Lambda }^{\mathrm{rh}}(\mathcal{D}_{V})$
stands for the category consisting of holomorphic regular holonomic
$\mathcal{D}_{V}$%
-modules
whose characteristic variety \noindent is contained in
$\Lambda $
the union of conormal bundles to the orbits for the action of
$G$
on the complex vector space
$V$%
.\bigskip\

Let
$\mathcal{M}$
be an object in the category
$\mathrm{Mod}_{\Lambda }^{\mathrm{rh}}(\mathcal{D}_{V})$%
, denote by
$\Psi \left( \mathcal{M}\right) $
the submodule of
$\Gamma \left( V,\mathcal{M}\right) $
consisting of $G'$%
-invariant homogeneous global sections
$\;u$
in
$\mathcal{M}$
such that
$\dim_{\mathbb{C}}\mathbb{C}\left[ \theta \right] u<\infty $:
\begin{equation}
\Psi \left( \mathcal{M}\right) :=\left\{ u\in \Gamma \left( V,\mathcal{M}%
\right) ^{G'},\;\dim _{\mathbb{C}}\mathbb{C}\left[ \theta \right]
u<\infty \right\} \text{.}
\end{equation}
We are going to show that
$\Psi \left( \mathcal{M}\right)$
is an object in
$\mathrm{Mod}^{\mathrm{gr}}(\mathcal{A})$%
.\newline
Let
$\left( \sigma _{1,}\cdots ,\sigma _{p}\right) \in \Gamma \left( V,\mathcal{M}\right) ^{G'}$
be a finite family of homogeneous invariant global sections generating the
$\mathcal{D}_{V}$%
-module
$\Psi\left({\mathcal{M}}\right)$
(see Theorem \ref{td}):
\begin{equation}
\Psi \left( \mathcal{M}\right) :=\mathcal{D}_{V}\left\langle \sigma _{1,}\cdots ,\sigma _{p}\right\rangle\text{.}
\end{equation}
We are going to see that the family
$\left( \sigma _{1,}\cdots ,\sigma _{p}\right)$
generates also
$\Psi\left( \mathcal{M}\right)$
as an
$\mathcal{A}$%
-module:
indeed, an invariant section
$\sigma \in \Psi\left(\mathcal{M}\right)$
can be written as
\begin{equation}
 \sigma =\sum\limits_{j=1}^{p}q_{j}\left( X,D\right)\sigma _{j}\quad \text{where}\quad q_{j}\in \mathcal{D}_{V}\text{.}
\end{equation}
Let
$G_{c}$
be the compact maximal subgroup of
$G'$
and denote by
$\widetilde{q_{j}}:=\int_{G_{c}}g\cdot q_{j}dg$
the average of
$q_{j}$
over
$G_{c}$%
.
Then, the average
$\widetilde{q_{j}}$
belongs to the algebra
$\overline{\mathcal{A}}$
(i.e.,
 $\widetilde{q_{j}}\in \overline{\mathcal{A}}$%
).
Now, denote by
$f_{j}$
the class of
$\widetilde{q_{j}}$
modulo
$\overline{\mathcal{J}}$:
\begin{equation}
f_{j}\;:=\; \widetilde{q_{j}}\mod \overline{\mathcal{J}} \quad
\text{that is} \quad
\mathcal{\ }f_{j}\in \mathcal{A}.
\end{equation}
Therefore, we also have
\begin{equation}
\sigma =\sum\limits_{j=1}^{p}\widetilde{q_{j}}\sigma
_{j}=\sum\limits_{j=1}^{p}f_{j}\sigma _{j}\quad \text{with} \mathcal{\ }f_{j}\in \mathcal{A}\text{.}
\end{equation}
This last means that
\begin{equation}
\Psi \left( \mathcal{M}\right) :=\mathcal{A}\left\langle \sigma _{1,}\cdots ,\sigma _{p}\right\rangle,\label{graded1}
\end{equation}
and
$\Psi \left( \mathcal{M}\right) $
is an
$\mathcal{A}$%
-module.
Moreover, according to Theorem \ref{tu} ii), we have
 \begin{equation}
\Psi \left( \mathcal{M}\right) =\bigoplus\limits_{\alpha \in \mathbb{C}%
}\Psi \left( \mathcal{M}\right) _{\alpha }\label{graded2}
\end{equation}
where
\begin{equation}
\Psi \left( \mathcal{M}\right) _{\alpha }:=\left[ \Psi \left( \mathcal{M}%
\right) \right] \bigcap \left[ \bigcup\limits_{p\in \mathbb{N}}\ker
(\theta -\alpha )^{p}\right] \; \text{ (with }\dim _{\mathbb{C}}\Psi \left( \mathcal{M}\right) _{\alpha } <\infty \text{)}\label{graded3}
\end{equation}
is the finite dimensional
$\mathbb{C}$%
-vector
space of homogeneous global sections of degree
$\alpha\in \mathbb{C}$
in
$\Psi \left( \mathcal{M}\right) $%
.
Finally, we can check that
\begin{equation}
\mathcal{A}\left[{k}\right]\Psi \left( \mathcal{M}\right) _{\alpha } \subset
\Psi \left( \mathcal{M}\right) _{\alpha + k} \quad \;\text{for all} \; k \in \mathbb{N},\; \alpha \in \mathbb{C}\text{.}\label{graded4}
\end{equation}
So,
$\Psi \left( \mathcal{M}\right) $
is a graded
$\mathcal{A}$%
-module
of finite type for the Euler vector field
$\theta$
thanks to (\ref{graded1})-(\ref{graded4}).
This means that
$\Psi\left(\mathcal{M}\right)$
is an object in
$\mathrm{Mod}^{\mathrm{gr}}(\mathcal{A})$%
.\bigskip

Conversely, let $T$ be an object in the category $\mathrm{Mod}^{\mathrm{gr}}(%
\mathcal{A})$%
, one associates to it the $\mathcal{D}_{V}$%
-module
\begin{equation}
\Phi \left( T\right) :=\mathcal{M}_{0}\bigotimes_{\mathcal{A}}T
\end{equation}
where $\mathcal{M}_{0}:=\mathcal{D}_{V}/\overline{\mathcal{J}}$%
. Then $\Phi
\left( T\right) $ is an object in the category $\mathrm{Mod}_{\Sigma }^{%
\mathrm{rh}}(\mathcal{D}_{V})$%
.\smallskip\

Thus, we have defined two functors
\begin{equation}
\Psi :\mathrm{Mod}_{\Lambda }^{\mathrm{rh}}(\mathcal{D}_{V})\longrightarrow
\mathrm{Mod}^{\mathrm{gr}}(\mathcal{A})\text{, }\Phi :\mathrm{Mod}^{\mathrm{%
gr}}(\mathcal{A})\longrightarrow \mathrm{Mod}_{\Lambda }^{\mathrm{rh}}(%
\mathcal{D}_{V})\text{.}
\end{equation}

We need the two following lemmas:\smallskip \smallskip

\begin{lemma}
\label{ls}The canonical morphism
\begin{equation}
T\longrightarrow \Psi (\Phi \left( T\right) )\text{, }t\longmapsto 1\otimes t
\end{equation}
is an isomorphism, and defines an isomorphism of functors $\mathrm{Id}_{%
\mathrm{Mod}^{\mathrm{gr}}(\mathcal{A})}\longrightarrow \Psi \circ \Phi $%
.
\end{lemma}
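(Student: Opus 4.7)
The plan is to establish $\iota_T: T \to \Psi(\Phi(T))$, $t \mapsto 1\otimes t$, as a well-defined $\mathcal{A}$-module map, and then to prove it is bijective by means of an averaging argument over a maximal compact subgroup $G_c \subset G'$. Naturality in $T$ is automatic from the construction, so the isomorphism of functors $\mathrm{Id} \overset{\sim}{\longrightarrow} \Psi \circ \Phi$ follows at once.

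First I would check well-definedness. The ideal $\overline{\mathcal{J}}$ lies inside $\overline{\mathcal{A}}$ and is therefore pointwise $G'$-fixed, so the $G'$-action on $\mathcal{D}_V$ descends to $\mathcal{M}_0 = \mathcal{D}_V/\mathcal{D}_V\overline{\mathcal{J}}$ and fixes the class of $1$; consequently $g\cdot(1\otimes t) = 1\otimes t$ for all $g \in G'$. Moreover $\theta \in \mathcal{A}$, so $\theta^n(1\otimes t) = 1\otimes \theta^n t$, and the $\theta$-finiteness of $t$ in $T$ transfers to $\iota_T(t)$.

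For surjectivity I would invoke Theorem \ref{td} applied to $\Phi(T) \in \mathrm{Mod}_{\Lambda}^{\mathrm{rh}}(\mathcal{D}_V)$: any $u \in \Psi(\Phi(T))$ can be written as $u = \sum_i P_i \otimes t_i$ with $P_i \in \mathcal{D}_V$ and $t_i \in T$. Since $G'$ acts trivially on the second factor (the tensor product is taken over $\mathcal{A}$, which is $G'$-fixed) and $u$ itself is invariant, averaging over $G_c$ gives $u = \widetilde{u} = \sum_i \widetilde{P_i} \otimes t_i$ with $\widetilde{P_i} = \int_{G_c} g\cdot P_i\, dg \in \overline{\mathcal{A}}$. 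Writing $f_i$ for the class of $\widetilde{P_i}$ modulo $\overline{\mathcal{J}}$ yields $u = \sum f_i \otimes t_i = 1 \otimes \sum f_i t_i = \iota_T(\sum f_i t_i)$.

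For injectivity I would construct an $\mathcal{A}$-linear retraction $r: \mathcal{M}_0 \to \mathcal{A}$ of the unit inclusion $\mathcal{A} \hookrightarrow \mathcal{M}_0$, again defined by the averaging $P \mapsto \widetilde{P}$; its $\mathcal{A}$-linearity follows from the $G'$-invariance of elements of $\mathcal{A}$. Applying $-\otimes_{\mathcal{A}} T$ then gives a retraction of $\iota_T$. The main obstacle, and the real content of the argument, is the well-definedness of $r$, i.e.\ the identity
\begin{equation*}
\overline{\mathcal{A}} \cap \mathcal{D}_V\overline{\mathcal{J}} = \overline{\mathcal{J}}\text{.}
\end{equation*}
The inclusion $\supset$ is tautological. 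For $\subset$, given an invariant element $P = \sum Q_i J_i$ with $Q_i \in \mathcal{D}_V$ and $J_i \in \overline{\mathcal{J}} \subset \overline{\mathcal{A}}$ already $G'$-fixed, averaging gives $P = \widetilde{P} = \sum \widetilde{Q_i} J_i \in \overline{\mathcal{A}} \cdot \overline{\mathcal{J}} = \overline{\mathcal{J}}$. This completes the proof that $\iota_T$ is bijective, and the naturality of $t \mapsto 1\otimes t$ in $T$ assembles the family $\{\iota_T\}$ into the asserted isomorphism of functors.
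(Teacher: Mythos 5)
Your proof is correct and follows essentially the same route as the paper: averaging over the maximal compact subgroup $G_c\subset G'$ both to produce an $\mathcal{A}$-linear retraction $\mathcal{M}_0\to\mathcal{A}$ (giving injectivity) and to show every $G'$-invariant element of $\mathcal{M}_0\otimes_{\mathcal{A}}T$ lies in $1\otimes T$ (giving surjectivity). Your explicit verification that $\overline{\mathcal{A}}\cap\mathcal{D}_V\overline{\mathcal{J}}=\overline{\mathcal{J}}$ is in fact a cleaner justification of the retraction's well-definedness than the paper's corresponding remark.
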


\begin{proof}
We have set 
$\mathcal{M}_{0}:=\mathcal{D}_{V}/\overline{\mathcal{J}}$%
.
Denote by
$\varepsilon $
(the class of
$1_{\mathcal{D}}$
modulo
$\overline{\mathcal{J}}$%
) the canonical generator of
$\mathcal{M}_{0}$
. Recall that
$G_{c}$
is the compact maximal subgroup of
$G'$%
. Let
$h\in \mathcal{D}_{V} $
, denote by
$\widetilde{h}$
$\in \overline{\mathcal{A}}$
its average on
$G_{c}$
and by
$\varphi $
the class of
$\widetilde{h}$
modulo
$\overline{\mathcal{J}}$
, that is,
$\varphi \in $
$\mathcal{A}$%
. \newline
Since
$\varepsilon $
is
$G'$%
-invariant, we get
$\widetilde{h\varepsilon }$
$=\widetilde{h}\varepsilon =\varepsilon \varphi $
. Moreover, we have
$\widetilde{h}\varphi =0$
if and only if
$\widetilde{h}\in \overline{\mathcal{J}}$
, in other words
$\varphi =0$
. Therefore, the average operator (over
$G_{c}$%
)
$\mathcal{D}_{V}$
$\longrightarrow \overline{\mathcal{A}},$
$h\longmapsto \widetilde{h}$
induces a surjective
morphism of
$\mathcal{A}$%
-modules
$v:\mathcal{M}_{0}\longrightarrow \mathcal{A}$
. More generally, for any
$\mathcal{A}$%
-module
$T$
in the category
$\mathrm{Mod}^{\mathrm{gr}}(\mathcal{A})$
the morphism
$v\otimes 1_{T}$ is
surjective
\begin{equation}
v_{T}:\mathcal{M}_{0}\bigotimes_{\mathcal{A}}T\longrightarrow \mathcal{A}%
\bigotimes_{\mathcal{A}}T=T
\end{equation}
which is the left inverse of the morphism
\begin{equation}
u_{T}:T\longrightarrow \mathcal{M}_{0}\bigotimes_{\mathcal{A}}T\text{, }%
t\longmapsto \varepsilon \otimes t\text{,}
\end{equation}
that is,
$\left( v\otimes 1_{T}\right) \circ \left( \varepsilon \otimes
1_{T}\right) =v\left( \varepsilon \right) =1_{T}$
. This means that the
morphism
$u_{T}$
is injective. Next, the image of
$u_{T}$
is exactly the set of invariant sections of
$\mathcal{M}_{0}\bigotimes\limits_{\mathcal{A}}T=\Phi \left( T\right) $%
, that is,
$\Psi (\Phi (T))$
: indeed if
$\sigma=\sum\limits_{i=1}^{p}h_{i}\otimes t_{i}$
is an invariant section in
$\mathcal{M}_{0}\bigotimes\limits_{\mathcal{A}}T$
, we may replace each
$h_{i} $
by its average
$\widetilde{h_{i}}\in \mathcal{A}$
, then we get
\begin{equation}
\sigma =\sum\limits_{i=1}^{p}\widetilde{h_{i}}\otimes t_{i}=\varepsilon
\otimes \sum\limits_{i=1}^{p}\widetilde{h_{i}}t_{i}\;\in \;\varepsilon
\otimes T\text{,}
\end{equation}
that is,
$\sum\limits_{i=1}^{p}\widetilde{h_{i}}t_{i}\in T$
. Therefore, the
morphism
$\ u_{T}$
is an isomorphism from
$T$
to
$\Psi \left( \Phi \left(T\right) \right) $
and defines an isomorphism of functors.
\end{proof}

\medskip \medskip Next, we note the following:

\begin{lemma}
\label{lh}The canonical morphism
\begin{equation}
w:\Phi \left( \Psi \left( \mathcal{M}\right) \right) \longrightarrow
\mathcal{M}
\end{equation}
is an isomorphism and defines an isomorphism of functors
$\Phi \circ \Psi\longrightarrow \mathrm{Id}_{\mathrm{Mod}_{\Sigma }^{\mathrm{rh}}(\mathcal{D}_{V})}$%
.
\end{lemma}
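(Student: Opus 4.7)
The plan is to construct the canonical morphism $w$ by the formula $w(\overline{P}\otimes u) := Pu$ for $P\in \mathcal{D}_{V}$ and $u\in \Psi(\mathcal{M})$, which is well-defined because $\Psi(\mathcal{M})$ is an $\mathcal{A}$-module and hence is annihilated by $\overline{\mathcal{J}}$. Surjectivity would follow at once: by Theorem \ref{td} combined with Theorem \ref{tu}, every $\mathcal{M}$ in $\mathrm{Mod}_{\Lambda}^{\mathrm{rh}}(\mathcal{D}_{V})$ is generated over $\mathcal{D}_{V}$ by finitely many $\theta$-homogeneous $G'$-invariant global sections, which lie in $\Psi(\mathcal{M})$; since $\mathrm{Im}(w) = \mathcal{D}_{V}\cdot \Psi(\mathcal{M})$, this gives $\mathrm{Im}(w) = \mathcal{M}$.

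For injectivity, I would set $\mathcal{K} := \ker w$ and observe that, as a sub-$\mathcal{D}_{V}$-module of $\Phi(\Psi(\mathcal{M})) \in \mathrm{Mod}_{\Lambda}^{\mathrm{rh}}(\mathcal{D}_{V})$, it too lies in this category. The functor $\Psi$ is left exact, being the intersection of the left-exact global sections functor with the $G'$-invariance and $\theta$-finiteness kernel conditions, so applying it to $0 \to \mathcal{K} \to \Phi(\Psi(\mathcal{M})) \to \mathcal{M}$ yields $\Psi(\mathcal{K}) \hookrightarrow \ker \Psi(w)$. The composition $\Psi(w)\circ u_{\Psi(\mathcal{M})}$ sends $u$ to $w(\varepsilon\otimes u) = u$ and so is the identity on $\Psi(\mathcal{M})$; combined with Lemma \ref{ls}, which asserts that $u_{\Psi(\mathcal{M})}$ is an isomorphism, this forces $\Psi(w)$ to be an isomorphism, whence $\Psi(\mathcal{K}) = 0$. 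A second invocation of Theorem \ref{td} applied to $\mathcal{K}$ then delivers $\mathcal{K} = \mathcal{D}_{V}\cdot \Psi(\mathcal{K}) = 0$.

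Naturality of $w$ in $\mathcal{M}$ is transparent from the defining formula: for a $\mathcal{D}_{V}$-linear $\phi : \mathcal{M} \to \mathcal{N}$ one has $\phi(w_{\mathcal{M}}(\overline{P}\otimes u)) = P\phi(u) = w_{\mathcal{N}}(\overline{P}\otimes \Psi(\phi)(u))$, and combining this with the bijectivity established above produces the desired isomorphism of functors $\Phi\circ\Psi \overset{\sim}{\longrightarrow} \mathrm{Id}_{\mathrm{Mod}_{\Lambda}^{\mathrm{rh}}(\mathcal{D}_{V})}$. The main obstacle will be verifying that $\Phi(\Psi(\mathcal{M}))$ genuinely belongs to $\mathrm{Mod}_{\Lambda}^{\mathrm{rh}}(\mathcal{D}_{V})$, so that the kernel $\mathcal{K}$ inherits the regular holonomicity and the containment of characteristic variety in $\Lambda$ needed to apply Theorem \ref{td} to it; this will require combining the explicit presentation of $\mathcal{A}$ from Corollary \ref{cu} with the good-filtration compatibility of Remark \ref{special} in order to exhibit on $\mathcal{M}_{0}\otimes_{\mathcal{A}} T$ a good filtration that is stable under $\mathfrak{g}$ and whose associated graded has support in $\Lambda$.
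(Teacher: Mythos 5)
Your proposal follows essentially the same route as the paper: surjectivity of $w$ from Theorem \ref{td}, and injectivity by showing the kernel $\mathcal{K}$ has no nonzero $G'$-invariant homogeneous sections (via Lemma \ref{ls}) and then applying Theorem \ref{td} to $\mathcal{K}$ to conclude $\mathcal{K}=0$. The one point you flag as an obstacle --- that $\Phi(\Psi(\mathcal{M}))$, and hence $\mathcal{K}$, must be checked to lie in $\mathrm{Mod}_{\Lambda}^{\mathrm{rh}}(\mathcal{D}_{V})$ before Theorem \ref{td} can be invoked --- is indeed left implicit in the paper, which simply asserts that $\Phi(T)$ belongs to the category; your identification of this as the step requiring care is accurate.
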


\begin{proof}
As in the theorem \ref{td}, the
$\mathcal{D}_{V}$%
-module
$\mathcal{M}$
is generated by a finite family of invariant sections
$\left( \sigma_{i}\right) _{i=1,\cdots ,p}\in \Psi \left( \mathcal{M}\right) $
so that the morphism
$w$
is surjective. Now, consider
$\mathcal{Q}$
the kernel of the morphism
$w:\Phi \left( \Psi \left( \mathcal{M}\right) \right)\longrightarrow \mathcal{M}$
. It is also generated over
$\mathcal{D}_{V}$
by its invariant sections , that is, by
$\Psi \left( \mathcal{Q}\right) $%
. Then
we get
\begin{equation}
\Psi \left( \mathcal{Q}\right) \subset \Psi \left[ \Phi \left( \Psi \left(
\mathcal{M}\right) \right) \right] =\Psi \left( \mathcal{M}\right)
\end{equation}
where we used $\Psi \circ \Phi =Id_{\mathrm{Mod}^{\mathrm{gr}}(\mathcal{A})}$
(see the preceding Lemma \ref{ls}). Since the morphism $\Psi \left( \mathcal{%
M}\right) \longrightarrow \mathcal{M}$ is injective (%
$\Psi \left( \mathcal{M}%
\right) \subset \Gamma \left( V,\;\mathcal{M}\right) $%
), we obtain
$\Psi
\left( \mathcal{Q}\right) =0$%
. Therefore
 $\mathcal{Q}=0$ 
(because $\Psi
\left( \mathcal{Q}\right) $ generates $\mathcal{Q}$%
).
\end{proof}

\medskip\

\noindent This section ends by Theorem \ref{MR} established by means of the
preceding lemmas.\smallskip \smallskip

\begin{theorem}\label{MR}
Let 
$(G,V)$ 
be a representation of Capelli type with a one-dimensional quotient. Then
the functors $\Phi $ and $\Psi $ induce equivalence of categories 
\begin{equation}
\mathrm{Mod}_{\Lambda}^{\mathrm{rh}}(\mathcal{D}_{V})\;\overset{\sim }{%
\longrightarrow }\mathrm{Mod}^{\mathrm{gr}}(\mathcal{A}).
\end{equation}
\end{theorem}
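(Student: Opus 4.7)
The plan is to assemble Theorem \ref{MR} directly from Lemmas \ref{ls} and \ref{lh}: an equivalence of categories is, by definition, a pair of functors together with natural isomorphisms between each composition and the corresponding identity functor, and this data is exactly what the two preceding lemmas provide. All that remains is a short bookkeeping argument, since the substantive work, verifying that the two functors land in the correct categories and that the unit and counit are natural isomorphisms, has already been carried out.

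Concretely, I would first recall that the functor $\Psi$ is well-defined from $\mathrm{Mod}_{\Lambda}^{\mathrm{rh}}(\mathcal{D}_V)$ to $\mathrm{Mod}^{\mathrm{gr}}(\mathcal{A})$: given $\mathcal{M}$, Theorem \ref{td} produces finitely many $G'$-invariant homogeneous global sections $\sigma_1,\ldots,\sigma_p$ generating $\mathcal{M}$ over $\mathcal{D}_V$; the averaging argument over the maximal compact subgroup $G_c \subset G'$ promotes any decomposition $\sigma = \sum q_j \sigma_j$ with $q_j \in \mathcal{D}_V$ to $\sigma = \sum f_j \sigma_j$ with $f_j \in \mathcal{A}$, using that $\bar{\mathcal{J}}$ annihilates invariant sections by the Bernstein--Sato relations of Proposition \ref{pu}; and Theorem \ref{tu}(ii) supplies the grading with finite-dimensional weight spaces. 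Dually, $\Phi(T) = \mathcal{M}_0 \otimes_{\mathcal{A}} T$ with $\mathcal{M}_0 = \mathcal{D}_V/\bar{\mathcal{J}}$ lies in $\mathrm{Mod}_{\Lambda}^{\mathrm{rh}}(\mathcal{D}_V)$, since $\mathcal{M}_0$ itself does (its characteristic variety lies in $\Lambda$ by the definition of $\bar{\mathcal{J}}$ via the vanishing of principal symbols of vector fields from $\tau(\mathfrak{g})$).

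With these functors in hand, I would then simply invoke the two lemmas: Lemma \ref{ls} gives a natural isomorphism $\mathrm{Id}_{\mathrm{Mod}^{\mathrm{gr}}(\mathcal{A})} \xrightarrow{\sim} \Psi \circ \Phi$ via $t \mapsto 1 \otimes t$, while Lemma \ref{lh} gives a natural isomorphism $\Phi \circ \Psi \xrightarrow{\sim} \mathrm{Id}_{\mathrm{Mod}_{\Lambda}^{\mathrm{rh}}(\mathcal{D}_V)}$ via the canonical evaluation $P \otimes \sigma \mapsto P\sigma$. This is precisely the data of an equivalence of categories, so the theorem follows.

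The main obstacle is not located in this final assembly but upstream. The nontrivial input is Theorem \ref{td}, which asserts that every $\mathcal{M} \in \mathrm{Mod}_{\Lambda}^{\mathrm{rh}}(\mathcal{D}_V)$ is generated over $\mathcal{D}_V$ by its $G'$-invariant global sections; this is what allows Lemma \ref{lh} to conclude that the kernel $\mathcal{Q}$ of $\Phi(\Psi(\mathcal{M})) \to \mathcal{M}$ vanishes, since $\mathcal{Q}$ is generated by $\Psi(\mathcal{Q}) = 0$. Given Theorem \ref{td}, Corollary \ref{cu}, and the averaging-over-$G_c$ technique, Lemmas \ref{ls} and \ref{lh} are essentially formal, and Theorem \ref{MR} reduces to packaging them.
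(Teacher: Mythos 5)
Your proof is correct and follows the paper's own route exactly: the paper likewise establishes Theorem \ref{MR} by checking that $\Psi$ and $\Phi$ land in the right categories (via Theorem \ref{td}, the averaging over $G_c$, and Theorem \ref{tu}(ii)) and then invoking Lemmas \ref{ls} and \ref{lh} for the two natural isomorphisms. Your identification of Theorem \ref{td} as the substantive input is also the paper's view.
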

\bigskip

\begin{center}
{\textbf{\underline{Appendix}}}
\end{center}

\begin{center}
\underline{\textbf{Appendix A: Representations of Capelli type with one-dimensional quotient}}

\begin{equation*}
\begin{array}{cccc}

         & \underline{(G,\; V)} \qquad & \underline{\deg{f}}\qquad & \qquad \underline{ b(s)}\\
         \\
(1)\quad &   (SO(n)\times\mathbb{C}^{*},\; \mathbb{C}^{n}) \qquad &  2 \qquad & (s+1)(s+\frac{n}{2}) \\
\\
(2)\quad &  (GL(n),\; S^{2}\mathbb{C}^{n} ) \qquad & n \qquad & \prod_{i=1}^{n}(s +\frac{i+1}{2})\\
\\
(3)\quad & (GL(n),\; \Lambda^{2} \mathbb{C}^{n}),\; n\; \text{even} \qquad  & \frac{n}{2} \qquad & \prod_{i=1}^{n}(s +2i-1)\\
\\
(4)\quad & (GL(n)\times{SL(n)},\; M_{n}(\mathbb{C})) \qquad & n \qquad & \prod_{i=1}^{n}(s +i)\\
\\
(5)\quad & (Sp(n)\times GL(2), \; \left(\mathbb{C}^{2n}\right)^{2}) \qquad & 2 \qquad & (s+1)(s+2n)\\
\\
(6)\quad & (SO(7)\times \mathbb{C}^{*}, \; spin = \mathbb{C}^{8}) \qquad & 2 \qquad & (s+2)(s+4)\\
\\
(7)\quad & (G_{2}\times \mathbb{C}^{*}, \;\mathbb{C}^{7}) \qquad & 2 \qquad & (s + 1)(s + \frac{7}{2})\\
\\
(8)\quad & (GL(4)\times Sp(2), M_{4}(\mathbb{C})) \qquad &  4 \qquad & (s+1)(s+2)(s+3)(s+4)
\end{array}
\end{equation*}
\newline

\underline{\textbf{Appendix B: Generic isotropy subgroups 
$G_{X_{0}}$ 
for representations of Capelli type}}

\begin{equation*}
\begin{array}{cccc}

         & \underline{(G,\; V)}  & \qquad \underline{G_{X_{0}}:= \text{isotropy subgroup at generic point} \;X_{0}
\in V\backslash {f^{-1}(0)}}\\
         \\
(1)\quad &   (SO(n)\times\mathbb{C}^{*},\; \mathbb{C}^{n})  \quad & SO(1)\times SO(n-1) \\
\\
(2)\quad &  (GL(n),\; S^{2}\mathbb{C}^{n} )  \quad & O(n)\\
\\
(3)\quad & (GL(n),\; \Lambda^{2} \mathbb{C}^{n}),\; n\; \text{even}  \quad & Sp(\frac{n}{2})\\
\\
(4)\quad & (GL(n)\times{SL(n)},\; M_{n}(\mathbb{C})) \quad & Sp(1)\times Sp(n-1)\\
\\
(5)\quad & (Sp(n)\times GL(2), \; \left(\mathbb{C}^{2n}\right)^{2}) \quad & SL(n)\\
\\
(6)\quad & (SO(7)\times \mathbb{C}^{*}, \; spin = \mathbb{C}^{8}) \qquad & SO(1)\times SO(6)\\
\\
(7)\quad & (G_{2}\times \mathbb{C}^{*},\; \mathbb{C}^{7})  \quad & \\
\\
(8)\quad & (GL(4)\times Sp(2), M_{4}(\mathbb{C}))   \quad &
\end{array}
\end{equation*}

(see A. Sasada \cite[(1), (2), (3), (13), (15)  p. 79-83]{Sa}
or Sato-Kimura
\cite[(1), (2), (3), (13), (15), p. 144-145]{S-K})
\newline

\underline{\textbf{Appendix C: Generic isotropy subgroups 
$H$ 
for derived subgroups 
$G'$ 
of the group 
$G$}}

\begin{equation*}
\begin{array}{ccc}

         & \underline{(G',\; V)}   & \underline{H = \text{isotropy subgroup at a generic point} 
 \;X_{0} \in V\backslash {f^{-1}(0)} }\\
         \\
(1)\quad &   (SO(n),\; \mathbb{C}^{n})   \quad & SO(1)\times SO(n-1) \\
\\
(2)\quad &  (SL(n),\; S^{2}\mathbb{C}^{n} ) \quad & SO(n)\\
\\
(3)\quad & (SL(n),\; \Lambda^{2} \mathbb{C}^{n}),\; n\; \text{even}   \qquad & Sp(\frac{n}{2})\\
\\
(4)\quad & (SL(n)\times{SL(n)},\; M_{n}(\mathbb{C}))  \quad & Sp(1)\times Sp(n-1)\\
\\
(5)\quad & (Sp(n)\times SL(2), \; \left(\mathbb{C}^{2n}\right)^{2})  \quad & SL(n)\\
\\
(6)\quad & (SO(7), \; spin = \mathbb{C}^{8})  \quad & SO(1)\times SO(6)\\
\\
(7)\quad & (G_{2}, \;\mathbb{C}^{7})  \qquad & \\
\\
(8)\quad & (SL(4)\times Sp(2), M_{4}(\mathbb{C}))   \quad &
\end{array}
\end{equation*}

\end{center}
\medskip

\noindent\textbf{Remark.}
The generic isotropy\footnote{a generic isotropic subgroup of
$G'$
is a stationary subgroup
at a generic point 
$X_{0}\in V \backslash S$
with
$S: f= 0$%
.} subgroups 
$H$
of
$(G', V)$
are connected.
\newline
\bigskip

\noindent \textbf{Aknowledgements: }  
The essential part of this paper was completed while
the author was a visiting member first at Max-Planck Institute for Mathematics (MPI), and then
at TATA Institute of Fundamental Research (TIFR). The author would like to express his
heartiest thanks to all these institutions and their members for their hospitalities. Specially,
we thank N. Nitsure for helpful discussions.

\noindent Philibert Nang\\

\noindent\'Ecole Normale Sup\'erieure,\newline
Laboratoire de Recherche en Math\'{e}matiques\newline
BP 8637\newline
Libreville, Gabon\\

\noindent E-mail: nangphilibert@yahoo.fr, pnang@ictp.it

\end{document}